\numberwithin{equation}{section}
\numberwithin{figure}{section}
\newtheoremstyle{theoremstyle}
  {10pt}      %  Space above
  {5pt}       %  Space below
  {\itshape}  %  Body font
  {}          %  Indent amount (empty = no indent, \parindent = para indent)
  {\bfseries} %  Thm head font
  {:}         %  Punctuation after thm head
  {.5em}      %  Space after thm head: " " = normal interword space;
\newtheoremstyle{examplestyle}
  {10pt}      %  Space above
  {5pt}       %  Space below
  {}          %  Body font
  {}          %  Indent amount (empty = no indent, \parindent = para indent)
  {\bfseries} %  Thm head font
  {:}         %  Punctuation after thm head
  {.5em}      %  Space after thm head: " " = normal interword space;
\theoremstyle{theoremstyle}
\newtheorem{theorem}{Theorem}[section]
\newtheorem*{theorem*}{Theorem}
\newtheorem{lemma}[theorem]{Lemma}
\newtheorem{proposition}[theorem]{Proposition}
\newtheorem*{proposition*}{Proposition}
\newtheorem{corollary}[theorem]{Corollary}
\newtheorem*{corollary*}{Corollary}
\newtheorem{definition*}{Definition}
\newtheorem{remark}[theorem]{Remark}
\newtheorem{remark*}{Remark}
\newcommand{\G}{\mathbf{G}_{\mathfrak{m}}}
\newcommand{\KU}{\mathbf{KU}}
\newcommand{\KO}{\mathbf{KO}}
\newcommand{\ku}{\mathbf{ku}}
\newcommand{\ko}{\mathbf{ko}}
\newcommand{\KGL}{\mathbf{KGL}}
\newcommand{\KQ}{\mathbf{KQ}}
\newcommand{\KW}{\mathbf{KW}}
\newcommand{\MGL}{\mathbf{MGL}}
\newcommand{\kgl}{\mathbf{kgl}}
\newcommand{\kq}{\mathbf{kq}}
\newcommand{\E}{\mathbf{E}}
\newcommand{\e}{\mathbf{e}}
\newcommand{\FF}{\mathbf{F}}
\newcommand{\f}{\mathsf{f}}
\newcommand{\s}{\mathsf{s}}
\newcommand{\ii}{\mathsf{i}}
\newcommand{\rr}{\mathsf{r}}
\newcommand{\scc}{\mathsf{sc}}
\newcommand{\LL}{\mathbf{L}}
\newcommand{\vcd}{\mathsf{vcd}}
\newcommand{\MSS}{\mathbf{MSS}}
\newcommand{\holim}{\mathrm{holim}}
\newcommand{\Char}{\mathsf{char}}
\newcommand{\MZ}{\mathbf{M}\mathbf{Z}}
\newcommand{\unit}{\mathbf{1}}
\newcommand{\Z}{\mathbb{Z}}
\newcommand{\Sm}{\mathbf{Sm}}
\newcommand{\SH}{\mathbf{SH}}
\newcommand{\HH}{\mathbf{H}}
\newcommand{\HZ}{\mathbf{H}\mathbf{Z}}
\newcommand{\HV}{\mathbf{H}\mathbf{V}}
\newcommand{\map}{\mathrm{map}}
\newcommand{\eff}{\mathsf{eff}}
\newcommand{\colim}{\mathrm{colim}}
\newcommand{\op}{\mathsf{op}}
\newcommand{\hocolim}{\mathrm{hocolim}}
\title{{\bf The motivic Hopf map solves the homotopy limit problem for $K$-theory}}
\author{Oliver R\"ondigs, Markus Spitzweck, Paul Arne {\O}stv{\ae}r}
\date{January 20, 2017}
\begin{document}
\maketitle
\begin{abstract}
We solve affirmatively the homotopy limit problem for $K$-theory over fields of finite virtual cohomological dimension.
Our solution employs the motivic slice filtration and the first motivic Hopf map.
\end{abstract}

\section{Introduction}
\label{section:introduction}

A homotopy limit problem asks for an equivalence between fixed points and homotopy fixed points for a group action \cite{Thomason}.
In some contexts, 
the fixed points are easily described, 
and one then obtains a description of the otherwise intractable homotopy fixed points.
Many distinguished results in algebraic topology take the form of a homotopy limit problem, 
e.g., 
the Atiyah-Segal completion theorem linking equivariant $K$-theory to representation theory, 
Segal's Burnside ring conjecture on stable cohomotopy, 
Sullivan's conjecture on the homotopy type of real points of algebraic varieties, 
and the Quillen-Lichtenbaum conjecture on Galois descent for algebraic $K$-theory under field extensions.

In this paper we give a surprising solution of the homotopy limit problem for $K$-theory in the stable motivic homotopy category.
This is achieved by analyzing the slice filtration for algebraic and hermitian $K$-theory \cite{R-O}, \cite{roendigs-spitzweck-oestvaer.slices-sphere}, \cite{voevodsky.open}, 
and completing with respect to the Hopf element $\eta$ in the Milnor-Witt $K$-theory ring \cite{Morel:ICM}.

To provide context for our approach, 
recall that complex conjugation of vector bundles gives rise to the Adams operation $\Psi^{-1}$ and an action of the group $C_{2}$ of order two on the complex $K$-theory spectrum $\KU$.
Atiyah \cite{Atiyah} shows there is an isomorphism
\begin{equation}
\label{equation:KOhlp}
\KO
\overset{\simeq}{\longrightarrow}
\KU^{hC_{2}}
\end{equation}
between the real $K$-theory spectrum $\KO$ and the $C_{2}$-homotopy fixed points of $\KU$. 
%On the level of coefficients rings, 
%$\pi_{\ast}\KU=\Z[\beta^{\pm 1}]$ with $\vert \beta\vert=2$ and $C_{2}$-action given by $\beta\mapsto -\beta$ outputs 
%$\pi_{\ast}\KO=\Z[\eta,\alpha,\beta^{\pm 4}]/(2\eta,\eta^{3},\eta\alpha,\alpha^{2}-4\beta)$ with $\vert \eta\vert=1$, $\vert \alpha\vert=4$.
%Moreover,
%for any $\E$, 
%Wood's theorem $\KU\simeq\KO\wedge\Sigma^{-2}\CP^{2}$ and (\ref{equation:KOhlp}) imply the weak equivalence
%\begin{equation}
%\label{equation:KOhlpE}
%\KO\wedge\E
%\overset{\simeq}{\longrightarrow}
%(\KU\wedge\E)^{hC_{2}}.
%\end{equation}
For the corresponding connective $K$-theory spectra, 
the homotopy cofiber of 
\begin{equation}
\label{equation:kohlp}
\ko
\longrightarrow
\ku^{hC_{2}}
\end{equation}
is an infinite sum $\bigvee_{i<0}\Sigma^{4i}\HH\Z/2$ of suspensions of the mod-$2$ Eilenberg-MacLane spectrum.

We are interested in the analogues of \eqref{equation:KOhlp} and \eqref{equation:kohlp} for algebraic $K$-theory $\KGL$ with $C_{2}$-action given by the Adams operation $\Psi^{-1}$ and hermitian $K$-theory $\KQ$,
see for example \cite[\S3,4]{R-O}.
Throughout we work in the stable motivic homotopy category $\SH$ over a field $F$ of characteristic $\Char(F)\neq 2$.
Our starting point is, 
somewhat unexpectedly in view of \eqref{equation:kohlp}, 
the naturally induced map between fixed points and homotopy fixed points 
\begin{equation}
\label{equation:kohlp2}
\gamma
\colon
\kq
\longrightarrow
\kgl^{hC_{2}}
\end{equation}
for the projections of $\KGL$ and $\KQ$ to the effective stable motivic homotopy category $\SH^{\eff}$.
The latter is the localizing subcategory of $\SH$ generated by suspension spectra of smooth schemes \cite{voevodsky.open}.
Let $\eta$ be the first motivic Hopf map induced by the natural map of algebraic varieties $\mathbf{A}^{2}\smallsetminus\{0\}\longrightarrow\mathbf{P}^{1}$.
Recall that $\eta$ defines a non-nilpotent element in the homotopy group $\pi_{1,1}\unit$ of the motivic sphere spectrum.
We let $\pi_{\star}\E$ denote the bigraded coefficients of a generic motivic spectrum $\E$.
The stable cone of $\eta$ acquires a Bousfield localization functor $\LL_{\eta}$ defined on all motivic spectra \cite[Appendix A]{Rondigs-Ostvar3}.
Let $\vcd_{2}(F)$ denote the mod-$2$ cohomological dimension of the absolute Galois group of $F(\sqrt{-1})$ \cite[Chapter 1,\S3]{serreGC}.
We solve the homotopy limit problem \eqref{equation:kohlp2} affirmatively by completing with respect to $\eta$.

\begin{theorem}
\label{theorem:kqetahomotopylimit}
Suppose $F$ is a field of $\Char(F)\neq 2$ and virtual cohomological dimension $\vcd_{2}(F)<\infty$.
Then \eqref{equation:kohlp2} induces an isomorphism
\begin{equation}
\label{equation:kqhlpeta}
\LL_{\eta}(\gamma)
\colon
{\kq^{\wedge}_{\eta}
\overset{\simeq}{\longrightarrow}
{\kgl^{hC_{2}}}}.
\end{equation}
\end{theorem}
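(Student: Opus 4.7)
The plan is to combine the motivic slice filtration with a detailed analysis of how the Hopf map acts on both sides, reducing the assertion to a convergent comparison of slice spectral sequences controlled by mod-$2$ Galois cohomology.

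First I would verify that the target $\kgl^{hC_{2}}$ is already $\eta$-complete, so that the induced map $\LL_{\eta}(\gamma)$ in \eqref{equation:kqhlpeta} really does land in $\kgl^{hC_{2}}$ without further localization. Since $\KGL$ is Bott-periodic and the image of $\eta$ in $\pi_{\star}\KGL$ is two-torsion of bounded height, $\eta$ acts nilpotently on $\KGL$ in the appropriate sense; passing to the connective cover $\kgl$ and then to $C_{2}$-homotopy fixed points should preserve $\eta$-completeness. This reduces the theorem to showing that the homotopy fiber of $\gamma$ is $\eta$-acyclic, i.e.\ that $\LL_{\eta}$ applied to the fiber is contractible.

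Next I would invoke the slice filtration. By the computations referenced in \cite{R-O} and \cite{voevodsky.open}, the slices of $\kgl$ are suspensions of $\HZ$, while the slices of $\kq$ involve mod-$2$ motivic cohomology contributions that encode the orthogonal structure. The Adams operation $\Psi^{-1}$ is compatible with the slice filtration in a controllable way, so the slice tower of $\kgl^{hC_{2}}$ is described by a $C_{2}$-descent spectral sequence built from the slices of $\kgl$. The map $\gamma$ then induces a comparison of two slice spectral sequences, and the task reduces to showing that $\LL_{\eta}(\gamma)$ induces an isomorphism on associated graded pieces. Here the hypothesis $\vcd_{2}(F)<\infty$ enters through the Beilinson--Lichtenbaum identification of mod-$2$ motivic cohomology with Galois cohomology, giving the bound on the contributing range needed to make the $\eta$-completion behave well and the spectral sequences converge.

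The main obstacle will be step two: precisely identifying the map on slices induced by $\gamma$, and proving that after $\eta$-completion the mod-$2$ motivic cohomology contributions in the slices of $\kq$ match the descent contributions appearing in the slice tower of $\kgl^{hC_{2}}$. Concretely, one must track how the Milnor--Witt structure on $\pi_{\star}\kq$ interacts with inverting or completing at $\eta$, and then verify stratum by stratum that $\eta$-completion identifies the $\kq$-slice spectral sequence with the $C_{2}$-descent spectral sequence for $\kgl$. Successfully matching the differentials and extensions in this range, using the Milnor conjecture and Voevodsky-type input, is the technical heart of the argument.
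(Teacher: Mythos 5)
Your proposal identifies the correct overall strategy — attack the map $\gamma$ through the slice filtration and compare the two slice spectral sequences — and in that sense it parallels the paper.  However, there are three places where the proposal misreads where the real work lies, and one of them is a genuine error.

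First, the justification you give for $\eta$-completeness of $\kgl^{hC_{2}}$ does not work.  You argue that $\eta$ ``acts nilpotently'' on $\KGL$ because its image in $\pi_{\star}\KGL$ is two-torsion of bounded height.  This is not the relevant structure: what is true, and what the paper uses, is that $\KGL$ admits an $\MGL$-module structure via the orientation map, and $\MGL\wedge\eta=0$ because the unit $\unit\to\MGL$ factors through $\unit/\eta$.  Hence every $\MGL$-module — in particular $\KGL$, $\kgl$ and also every slice, since $\s_{0}(\unit)\simeq\MZ$ is oriented — is already $\eta$-complete, and the $C_{2}$-homotopy fixed points of an $\eta$-complete spectrum are $\eta$-complete because the $\eta$-completion is itself a homotopy limit and homotopy limits commute.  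The nilpotence heuristic would not survive an attempt to make it precise.

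Second, you place the technical weight in the wrong place.  You anticipate ``matching the differentials and extensions'' in the two spectral sequences as the technical heart.  In fact, no differential or extension analysis is needed: the paper imports from \cite{R-O} a ready-made isomorphism of slices
$\s_{q}(\Upsilon)\colon\s_{q}(\KQ)\overset{\simeq}{\to}\s_{q}(\KGL^{hC_{2}})$, hence of $E_{1}$-pages, and then invokes Boardman's result that a morphism of conditionally convergent half-plane spectral sequences inducing an isomorphism on a fixed page induces an isomorphism of the filtered abutments.  The substantive labour that you do not mention is establishing \emph{conditional convergence}: for $\kq^{\wedge}_{\eta}$ this requires showing that $\kq/\eta$ is slice complete (which the paper deduces from the cofiber sequence $\Sigma^{1,1}\KQ\to\KQ\to\KGL$ and slice completeness of $\KGL$), and for $\kgl^{hC_{2}}$ this requires showing that the hypotheses under which effective covers and cocovers commute with homotopy fixed points are satisfied.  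That hypothesis — isomorphism $\s_{q}(\E^{hG})\overset{\simeq}{\to}\s_{q}(\E)^{hG}$ — is precisely where $\vcd_{2}(F)<\infty$ enters, via \cite[Proposition 4.24]{R-O}; it is not used through Beilinson--Lichtenbaum to ``bound the contributing range'' as you suggest.  The paper devotes Section 3 to these commutation questions exactly because the naïve expectation that $\SH^{\eff}$ is closed under homotopy fixed points fails, as the appendix shows.

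Third, you should be explicit that the passage from an isomorphism of slice spectral sequences to the conclusion uses $\eta$-completeness of the target to identify $\scc(\kgl^{hC_{2}})$ with $\kgl^{hC_{2}}$ on the nose, and that the final step passes to Nisnevich sheaves of homotopy groups to upgrade from the abutment isomorphism to an isomorphism of spectra.  With these corrections your plan would align with the paper's argument.
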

We show that the homotopy fixed point spectrum $\kgl^{hC_{2}}$ in \eqref{equation:kqhlpeta} is $\eta$-complete. 
The proof of Theorem \ref{theorem:kqetahomotopylimit} invokes the slice filtration 
\begin{equation}
\label{equation:slicefiltration}
\cdots
\subset
\Sigma^{q+1}_{T}\SH^{\eff}
\subset
\Sigma^{q}_{T}\SH^{\eff}
\subset
\Sigma^{q-1}_{T}\SH^{\eff}
\subset
\cdots
\end{equation}
introduced by Voevodsky \cite[\S2]{voevodsky.open}. 
Using \eqref{equation:slicefiltration} one associates to $\E$ an integrally graded family of slices $\s_{\ast}(\E)$ and a trigraded slice spectral sequence 
\begin{equation}
\label{equation:sssintro}
\pi_{\star}\s_{\ast}(\E)\Longrightarrow\pi_{\star}\E.
\end{equation}
We show that \eqref{equation:sssintro} converges conditionally for the $\eta$-completion of $\kq$ and also for the homotopy fixed point spectrum $\kgl^{hC_{2}}$.
In contrast to the topological situation \eqref{equation:KOhlp}, 
$\pi_{\star}\KQ$ and $\pi_{\star}\KGL$ are unknown over general fields.
Nonetheless we obtain a proof of Theorem \ref{theorem:kqetahomotopylimit} by using the computations of $\s_{\ast}(\KQ)$ and $\s_{\ast}(\KGL^{hC_{2}})$ accomplished in \cite{R-O}.

Next we turn to solving the homotopy limit problem 
\begin{equation}
\label{equation:bigKQhlp}
\Upsilon
\colon
\KQ
\longrightarrow
\KGL^{hC_{2}}.
\end{equation} 
Here $\KGL^{hC_{2}}$ is $\eta$-complete essentially due to motivic orientability of algebraic $K$-theory.
We proceed by comparing with the effective cocovers of $\KQ$ and $\KGL^{hC_{2}}$.
Remarkably, 
the first motivic Hopf map $\eta$ turns $\Upsilon$ into an isomorphism without altering the target in \eqref{equation:bigKQhlp}.
\begin{theorem}
\label{theorem:KQetahomotopylimit}
Suppose $F$ is a field of $\Char(F)\neq 2$ and virtual cohomological dimension $\vcd_{2}(F)<\infty$.
Then \eqref{equation:bigKQhlp} induces an isomorphism
\begin{equation}
\label{equation:effKQhlpeta}
\LL_{\eta}(\Upsilon)
\colon
{\KQ^{\wedge}_{\eta}}
\overset{\simeq}{\longrightarrow}
{\KGL^{hC_{2}}}.
\end{equation}
\end{theorem}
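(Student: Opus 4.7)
The plan is to deduce Theorem \ref{theorem:KQetahomotopylimit} from the connective statement of Theorem \ref{theorem:kqetahomotopylimit} by exhibiting both sides of \eqref{equation:bigKQhlp} as Bott-periodic telescopes of their effective cocovers. First I would verify that $\KGL^{hC_{2}}$ is already $\eta$-complete, which the introduction credits to motivic orientability of $\KGL$: multiplication by $\eta$ vanishes on the orientable spectrum $\KGL$, hence on the homotopy fixed points $\KGL^{hC_{2}}$, which is a homotopy limit of copies of $\KGL$. Consequently the right-hand side of \eqref{equation:effKQhlpeta} is unchanged by $\LL_{\eta}$, and it suffices to produce an equivalence $\LL_{\eta}(\Upsilon)$.

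Next I would match effective cocovers by identifying $\f_{0}\KQ\simeq\kq$ and $\f_{0}(\KGL^{hC_{2}})\simeq\kgl^{hC_{2}}$, and check that $\Upsilon$ restricts on effective cocovers to the map $\gamma$ of \eqref{equation:kohlp2}. Using Bott periodicity I would then express
\[
\KGL\simeq\colim\bigl(\kgl\xrightarrow{\beta}\Sigma^{-2,-1}\kgl\xrightarrow{\beta}\cdots\bigr),\quad
\KQ\simeq\colim\bigl(\kq\xrightarrow{\beta_{\KQ}}\Sigma^{-8,-4}\kq\xrightarrow{\beta_{\KQ}}\cdots\bigr),
\]
and take $C_{2}$-homotopy fixed points of the first presentation to describe $\KGL^{hC_{2}}$ as the telescope of shifted copies of $\kgl^{hC_{2}}$, using that the Bott class is $C_{2}$-invariant and that $(-)^{hC_{2}}$ commutes with these particular filtered colimits. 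Multiplicativity of $\gamma$ and the compatibility of the two Bott elements should then align these telescopes into a ladder realizing $\Upsilon$.

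Finally I would apply $\LL_{\eta}$. Since $\LL_{\eta}$ is a left Bousfield localization it commutes with filtered colimits, so $\KQ^{\wedge}_{\eta}$ is the telescope of $\Sigma^{-8i,-4i}\kq^{\wedge}_{\eta}$, while $\KGL^{hC_{2}}$ remains the telescope of shifted copies of $\kgl^{hC_{2}}$. Theorem \ref{theorem:kqetahomotopylimit} identifies each $\kq^{\wedge}_{\eta}$ with $\kgl^{hC_{2}}$ via $\LL_{\eta}(\gamma)$, and passing to the telescope yields the required isomorphism.

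The main obstacle I expect is the identification $\f_{0}(\KGL^{hC_{2}})\simeq\kgl^{hC_{2}}$ together with the compatibility of Bott elements under $\gamma$: one must verify that $\f_{0}$ commutes with the homotopy fixed-point construction for the specific $C_{2}$-action by $\Psi^{-1}$, and that $\gamma$ sends the hermitian periodicity generator of $\kq$ to a $C_{2}$-invariant class of $\kgl^{hC_{2}}$ whose image in $\KGL^{hC_{2}}$ inverts to Bott periodicity. Granted these two ingredients, the reduction to Theorem \ref{theorem:kqetahomotopylimit} is essentially formal.
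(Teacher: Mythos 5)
Your proposal has a genuine gap at the final step. You claim that $\LL_{\eta}$, being a left Bousfield localization, commutes with filtered colimits. This is false: as recalled in Section~\ref{section:tmhm}, $\LL_{\eta}\E$ coincides with $\eta$-completion $\E^{\wedge}_{\eta}=\holim_{n}\E/\eta^{n}$, a \emph{homotopy limit}. Localization at the cofiber $\unit/\eta$ is a nilpotent completion, not a smashing localization, and does not commute with sequential colimits (compare $p$-completion, where $L_{M\mathbb{Z}/p}$ destroys $p$-inverted information only after the colimit is formed). The equality $\KQ^{\wedge}_{\eta}\simeq\hocolim_{i}\Sigma^{-8i,-4i}\kq^{\wedge}_{\eta}$ is therefore not justified by your argument, and it is not a technical nicety: $\KQ$ carries non-trivial $\eta$-local information (namely $\KW\simeq\KQ[\eta^{-1}]$) precisely because it is obtained from $\kq$ by inverting Bott, so passing $\eta$-completion through that colimit is the crux of the matter, not a formality. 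The parallel claim --- that $(-)^{hC_{2}}$ commutes with the Bott telescope of $\kgl$, giving $\KGL^{hC_{2}}\simeq\hocolim_{q}\Sigma^{2q,q}\kgl^{hC_{2}}$ --- also needs real work, since homotopy fixed points under $C_{2}$ is a homotopy limit over an infinite diagram; the commutation does hold here, but only as a consequence of the slice analysis behind Corollary~\ref{corollary:hfpeffectivecovers} and Proposition~\ref{proposition:summary}(2), not of Bott-invariance of a periodicity class.

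The paper's proof avoids telescopes entirely and thereby both issues. It compares $\Upsilon$ with $\gamma$ through the distinguished triangle \eqref{equation:effectivecoverdefinition} for $\KQ$ and for $\KGL^{hC_{2}}$, shows that the induced map on effective cocovers $\f^{-1}(\Upsilon)\colon\f^{-1}(\KQ)\to\f^{-1}(\KGL^{hC_{2}})$ is already an equivalence \emph{before} any $\eta$-completion (Lemma~\ref{lemma:sliceiso} combined with the slice computation \eqref{equation:KGLsliceisomorphism}), and deduces that $\LL_{\eta}(\Upsilon)$ is an isomorphism if and only if $\LL_{\eta}(\gamma)$ is, which is Theorem~\ref{theorem:kqetahomotopylimit}. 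Your intermediate observations --- that $\KGL^{hC_{2}}$ is $\eta$-complete and that $\f_{0}(\KGL^{hC_{2}})\simeq\kgl^{hC_{2}}$ --- agree with the paper, but the telescope route requires the unestablished commutation statements above and does not constitute a proof as written.
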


Supplementing our main results we note that the $\eta$-arithmetic square 
\begin{equation}
\label{equation:etaarithmetic}
\xymatrix{
\KQ \ar[r] \ar[d] & \KW  \ar[d] \\
{\KQ^{\wedge}_{\eta}} \ar[r] & {\KQ^{\wedge}_{\eta}}[\eta^{-1}]
}
\end{equation}
for $\KQ$ \cite[\S3.1]{roendigs-spitzweck-oestvaer.slices-sphere} coincides up to isomorphism with the Tate diagram \cite[(20)]{HKO}
\begin{equation}
\label{equation:tatearithmetic}
\xymatrix{
\KQ \ar[r] \ar[d] & \KW \ar[d] \\
\KGL^{hC_{2}} \ar[r] & {\KGL}^{tC_{2}}
}
\end{equation}
for the $C_{2}$-action on $\KGL$. 
Here $\KW$ denotes the higher Witt-theory and ${\KGL}^{tC_{2}}$ denotes the Tate $K$-theory spectrum.
Moreover,
by representability, 
\eqref{equation:effKQhlpeta} implies that for every $X\in\Sm_{F}$ --- smooth $F$-schemes of finite type --- there is a naturally induced isomorphism
\begin{equation*}
\LL_{\eta}(\Upsilon)_{\star}
\colon
\pi_{\star}
{\KQ(X)^{\wedge}_{\eta}}
\overset{\cong}{\longrightarrow}
\pi_{\star}{\KGL(X)^{hC_{2}}}.
\end{equation*}

\begin{remark}
The earlier works \cite{BKSO} and \cite{HKO} identified the $2$-adic completion of the homotopy fixed points by showing an isomorphism 
$\pi_{\star}\KQ/2\overset{\cong}{\longrightarrow}\pi_{\star}\KGL^{hC_{2}}/2$.
Explicit calculations are carried out over the complex numbers $\mathbb{C}$ in \cite{IS}.
However, 
for the identification of ${\KGL^{hC_{2}}}$ in \eqref{equation:effKQhlpeta} it is paramount to work in $\SH$, 
so that the $\eta$-completion of $\KQ$ makes sense.
The commonplace assumption $\vcd_{2}(F)<\infty$ is also used in \cite{BKSO}, \cite{HKO}, 
and in the context of the Quillen-Lichtenbaum conjecture for \'etale $K$-theory \cite[\S4]{MR2544389}.
%originated in the $K$-theoretic context in \cite{MR1928650}, \cite{MR2168573}.
\end{remark}

\section{The first motivic Hopf map $\eta$}
\label{section:tmhm}

We view $\mathbf{A}^{2}\smallsetminus\{0\}$ and $\mathbf{P}^{1}$ as motivic spaces pointed at $(1,1)$ and $[1:1]$, 
respectively.
The canonical projection map $\mathbf{A}^{2}\smallsetminus\{0\}\longrightarrow\mathbf{P}^{1}$ induces the stable motivic Hopf map 
$\eta\colon\G\longrightarrow\unit$ for the motivic sphere spectrum $\unit$.
Iteration of $\eta$ yields the cofiber sequence
\begin{equation}
\label{equation:modetapowers}
\G^{\wedge n}
\overset{\eta^{n}}{\longrightarrow} 
\unit
\longrightarrow 
\unit/\eta^{n}.
\end{equation}
The $\eta$-completion $\E^{\wedge}_{\eta}$ of a motivic spectrum $\E$ is defined as the homotopy limit $\underset{n\rightarrow\infty}{\holim}\;\E/\eta^{n}$ 
of the canonically induced diagram
\begin{equation}
\label{equation:modetapowerstower}
\dots
\longrightarrow 
\E/\eta^{n+1}
\longrightarrow 
\E/\eta^{n}
\longrightarrow
\dots
\longrightarrow 
\E/\eta.
\end{equation}
By \eqref{equation:modetapowers} and \eqref{equation:modetapowerstower} there is a naturally induced map
\begin{equation}
\label{equation:etacompletionmap}
\E
\longrightarrow 
\E^{\wedge}_{\eta}.
\end{equation}
We say that $\E$ is {\it $\eta$-complete} if the map in \eqref{equation:etacompletionmap} is an isomorphism.
The Bousfield localization $\LL_{\eta}\E$ of $\E$ for the cone of $\eta$ coincides with $\E^{\wedge}_{\eta}$.
Recall that the algebraic cobordism spectrum $\MGL$ is the universal oriented motivic spectrum, 
see \cite{NSOfieldsinstitute}, \cite{PPRhha}.
\begin{lemma}
\label{lemma:orientedetaequivalence}
Every module over an oriented motivic ring spectrum is $\eta$-complete. 
\end{lemma}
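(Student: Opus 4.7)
The plan is to show that multiplication by $\eta$ acts null-homotopically on any $R$-module and then deduce $\eta$-completeness by a standard local-acyclic argument.

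First, I would use that an orientation of $R$ is equivalent to a ring spectrum map $\MGL\to R$, which factors the unit of $R$ through the unit $\unit\to\MGL$. The key input is that $\eta\in\pi_{1,1}\unit$ has image zero in $\pi_{1,1}\MGL$, hence in $\pi_{1,1}R$; this is a standard consequence of orientability, amounting to the splitting (after smashing with an oriented spectrum) of the cofiber sequence associated to the projection $\mathbf{A}^{2}\smallsetminus\{0\}\to\mathbf{P}^{1}$. Denoting this vanishing class by $\eta_R\colon\G\to R$, for any $R$-module $M$ with structure map $\mu_M\colon R\wedge M\to M$ the multiplication-by-$\eta$ endomorphism factors as
$$
\G\wedge M\xrightarrow{\eta_R\wedge\mathrm{id}_M}R\wedge M\xrightarrow{\mu_M}M.
$$
Since $\eta_R$ is null-homotopic, so is this composite.

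To deduce $M\simeq M^{\wedge}_{\eta}$, I would use the identification $\LL_\eta\simeq(-)^{\wedge}_{\eta}$ recalled in the paper. Being $\LL_\eta$-local is equivalent to $\map(X,M)\simeq 0$ for every motivic spectrum $X$ with $X/\eta\simeq 0$, i.e., on which $\eta\colon\G\wedge X\to X$ is an equivalence. For such an $X$ and any map $f\colon X\to M$, naturality of multiplication by $\eta$ yields $\eta_M\circ(\mathrm{id}_\G\wedge f)=f\circ\eta_X$; the left side is null because $\eta_M$ is null, and $\eta_X$ is invertible, so $f$ is null-homotopic. Applying the same argument to all suspensions of $X$ shows $\map(X,M)\simeq 0$, hence $M$ is $\LL_\eta$-local and therefore $\eta$-complete.

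The main obstacle is the cleanest justification of the vanishing $\eta_R=0$ for oriented $R$; while this is a standard fact, a self-contained argument requires either Morel's computations of the zeroth Milnor--Witt stem of the motivic sphere or a direct analysis of the Hopf bundle $\mathbf{A}^{2}\smallsetminus\{0\}\to\mathbf{P}^{1}$ using the orientation class. Everything else is a formal manipulation of the $R$-module structure and the equivalence between $\eta$-completion and Bousfield localization at the cofiber of $\eta$.
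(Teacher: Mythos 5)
Your proposal is correct and essentially follows the paper's own argument: both identify the key input as the vanishing of $\eta$ in $\pi_{1,1}\MGL$ (the paper cites an explicit factorization of the unit $\unit\to\MGL$ through $\unit/\eta$ from \cite[Lemma 3.24]{roendigs-spitzweck-oestvaer.slices-sphere}), which propagates along the orientation $\MGL\to R$ to make $\eta$ act null-homotopically on any $R$-module $M$. The paper leaves the step from ``$\eta$ acts as zero on $M$'' to ``$M$ is $\eta$-complete'' as ``readily''; your unpacking via $\LL_\eta$-locality (maps out of $\eta$-periodic spectra vanish) is a clean equivalent to the direct tower computation that $M/\eta^{n}\simeq M\vee\Sigma\G^{\wedge n}\wedge M$ with shift maps null on the second summand.
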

\begin{proof}
The unit map for algebraic cobordism $\unit\longrightarrow\MGL$ factors through the cone $\unit/\eta$, 
see \cite[Lemma 3.24]{roendigs-spitzweck-oestvaer.slices-sphere} for an explicit factorization, 
which implies $\MGL\wedge\eta=0$.
The statement for modules follows readily.
\end{proof}

\section{The slice filtration}
\label{section:sf}
In this section we discuss results for the slice filtration \cite{voevodsky.open} which will be applied in the proofs of our main results in Section \ref{section:proofsofmainresults}.  
Throughout we work over a base field $F$.

To \eqref{equation:slicefiltration} one associates distinguished triangles 
\begin{equation}
\label{equation:slicedefinition}
\f_{q+1}(\E)
\longrightarrow 
\f_{q}(\E)
\longrightarrow 
\s_{q}(\E), 
\end{equation}
for every motivic spectrum $\E$, 
see \cite[Theorem 2.2]{voevodsky.open}. 
Here the $q$th effective cover of $\E$ is the universal map $\f_{q}(\E)\longrightarrow\E$ from $\Sigma^{q}_{T}\SH^{\eff}$ to $\E$.
The $q$th slice $\s_{q}(\E)\in\Sigma^{q}_{T}\SH^{\eff}$ is uniquely determined up to isomorphism by \eqref{equation:slicedefinition}.
Every object of $\Sigma^{q+1}_{T}\SH^{\eff}$ maps trivially to $\s_{q}(\E)$.
It is technically important for many constructions to have a ``strict model'' for the slice filtration, 
e.g., 
by means of model categories as in \cite[\S3.1]{GRSO}, \cite[\S3.2]{Pelaez}.
We note that $\f_{q}\s_{q'}\simeq\s_{q'}\f_{q}$ follows from \cite[(2.2), \S6]{GRSO} for all $q,q'\in\Z$.

\begin{lemma}
\label{lemma:slicefiltrationcomplete}
The slice filtration is exhaustive in the sense that there is an isomorphism
\begin{equation}
\label{equation:exhaustive}
\underset{q\rightarrow-\infty}{\hocolim}\;\f_{q}(\E)
\overset{\simeq}{\longrightarrow}
\E.
\end{equation}
\end{lemma}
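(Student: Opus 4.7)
The plan is to prove that the canonical comparison map $\varphi \colon \underset{q \to -\infty}{\hocolim}\,\f_q(\E) \to \E$ is an isomorphism by testing it against a compact-generating family of $\SH$, using the universal property of the effective cover. The universal maps $\f_q(\E) \to \E$ from \eqref{equation:slicedefinition} are functorial in $q$ and assemble into $\varphi$.

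Next I would invoke that $\SH$ over $F$ is compactly generated by the shifted suspension spectra $G = S^{p,q} \wedge X_+$ with $X \in \Sm_F$ and $p, q \in \Z$. The key bookkeeping observation is that every such $G$ lies in $\Sigma^{q_0}_T \SH^{\eff}$ for all $q_0$ sufficiently negative: writing $T \simeq S^{2,1}$ and $S^{p,q} \simeq S^{p-q} \wedge \G^{\wedge q}$, one has $S^{p,q} \wedge X_+ \simeq T^{\wedge q_0} \wedge (S^{p-2q_0,\,q-q_0} \wedge X_+)$, and the factor in parentheses is effective as soon as $q_0 \leq \min(q,\, p-q)$. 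Hence the union $\bigcup_{q \in \Z} \Sigma^q_T \SH^{\eff}$ absorbs every compact generator.

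The universal property of the $q$th effective cover then supplies, for every $G \in \Sigma^q_T \SH^{\eff}$, a bijection $[G, \f_q(\E)] \overset{\simeq}{\longrightarrow} [G, \E]$. So for each generator $G$ the filtered system $\{[G, \f_q(\E)]\}_{q \to -\infty}$ is eventually constant with value $[G, \E]$. Compactness of $G$ converts the homotopy colimit into a colimit of abelian groups, yielding $[G, \underset{q}{\hocolim}\,\f_q(\E)] \cong [G, \E]$, so $\varphi$ induces bijections on $[G, -]$ for every $G$ in the generating family and is therefore an isomorphism in $\SH$.

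The main obstacle is the mild bigrading bookkeeping needed to verify that $\bigcup_{q \in \Z} \Sigma^q_T \SH^{\eff}$ captures a compact-generating family for $\SH$, together with a careful appeal to the universal property of $\f_q$. Nothing deeper than the definition of the slice filtration and the fact that compact objects commute with filtered homotopy colimits is required.
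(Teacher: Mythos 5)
Your argument is correct and follows essentially the same strategy as the paper: test the canonical map against the compact generators $\Sigma^{s,t}X_+$, observe that each generator lies in $\Sigma^{q'}_T\SH^{\eff}$ for $q'$ sufficiently negative, and use the universal property of $\f_q$ to identify $[G,\f_q(\E)]$ with $[G,\E]$ in that range. The only (minor) divergence is in how the homotopy colimit is handled: you commute $[G,-]$ past the filtered colimit via compactness of $G$, while the paper instead applies $\f_{q'}$, which preserves homotopy colimits, so the diagram becomes constant with value $\f_{q'}(\E)$; both routes are standard and equivalent here.
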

\begin{proof}
Each generator $\Sigma^{s,t}X_{+}$ of $\SH$ is contained in $\Sigma^{q'}_{T}\SH^{\eff}$ for some $q'\in\Z$.
Here $s,t\in\Z$ and $X\in\Sm_{F}$, 
see for example \cite[Theorem 9.1]{DD}.
Recall that $\f_{q'}$ preserves homotopy colimits \cite[Corollary 4.5]{Spitzweck}, \cite[Lemma 4.2]{voevodsky.open}.
By the universal property of the $q'$th effective cover it suffices to show there is an isomorphism
\begin{equation*}
\SH(\Sigma^{s,t}X_{+},\underset{q<q'}{\hocolim}\;\f_{q'}\f_{q}(\E))
\overset{\simeq}{\longrightarrow}
\SH(\Sigma^{s,t}X_{+},\f_{q'}(\E)).
\end{equation*}
This follows since $\f_{q'}\f_{q}\simeq\f_{q'}$ for $q<q'$.
\end{proof}

\begin{lemma}
\label{lemma:slicesareetacomplete}
The slices of a motivic spectrum are $\eta$-complete. 
\end{lemma}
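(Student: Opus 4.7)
The plan is to reduce to Lemma~\ref{lemma:orientedetaequivalence} by exhibiting every slice as a module over an oriented motivic ring spectrum, namely the motivic Eilenberg-MacLane spectrum $\HZ$.

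First I would invoke Voevodsky's conjecture on the zero slice of the motivic sphere spectrum, $\s_{0}(\unit)\simeq\HZ$, established by Levine in characteristic zero and extended to positive characteristic by Hoyois. This is available in our setting since $\Char(F)\neq 2$, and provides the ring spectrum to which we want to compare slices.

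Next I would invoke the monoidal enhancement of the slice filtration. Using a strict point-set model for $\f_{q}$ as in \cite[\S3.1]{GRSO} or \cite[\S3.2]{Pelaez}, one upgrades $\s_{0}(\unit)$ to a commutative ring spectrum and each $\s_{q}(\E)$ to an $\s_{0}(\unit)$-module. Combined with $\s_{0}(\unit)\simeq\HZ$ from the previous step, this shows every slice $\s_{q}(\E)$ is naturally an $\HZ$-module.

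Finally I would observe that $\HZ$ is oriented: the universal property of $\MGL$ as the universal oriented motivic ring spectrum \cite{NSOfieldsinstitute,PPRhha}, together with the existence of Chern classes in motivic cohomology, produces a ring map $\MGL\to\HZ$. In particular $\HZ$ is a module over the oriented ring spectrum $\MGL$, so every $\HZ$-module is such as well, and Lemma~\ref{lemma:orientedetaequivalence} gives $\eta$-completeness of every slice $\s_{q}(\E)$.

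The main potential obstacle is the step that upgrades the slice filtration to a structured monoidal filtration, so that the pointwise $\s_{0}(\unit)$-module structure on slices makes sense; once this technical input is in place (which is \cite{Pelaez}, \cite{GRSO}, \cite{Spitzweck}) together with the identification $\s_{0}(\unit)\simeq\HZ$, the remainder is a direct application of Lemma~\ref{lemma:orientedetaequivalence}.
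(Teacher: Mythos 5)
Your proposal is correct and follows essentially the same approach as the paper: identify $\s_{0}(\unit)$ with the motivic cohomology spectrum $\MZ$ (which you write as $\HZ$) via the zero-slice theorem, use the monoidal structure of the slice filtration from \cite{GRSO}, \cite{Pelaez} to make every slice an $\MZ$-module, observe that $\MZ$ is oriented, and conclude by Lemma~\ref{lemma:orientedetaequivalence}. The only difference worth flagging is how the zero-slice identification is extended beyond perfect fields: the paper does this by essentially-smooth base change using \cite[Lemma 2.9, Lemma 2.7(1)]{HKPAO} together with the functoriality of the slice filtration \cite[Theorem 2.12]{pp:functoriality}, whereas you cite a direct positive-characteristic result of Hoyois --- both routes lead to the same conclusion.
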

\begin{proof}
Every slice $\s_{q}(\E)$ is a module over the motivic ring spectrum $\s_{0}(\unit)$,  
cf.~\cite[\S6 (iv),(v)]{GRSO} and \cite[Theorem 3.6.13(6)]{Pelaez}. 
If $F$ is a perfect field, 
then $\s_{0}(\unit)$ is the motivic cohomology spectrum $\MZ$ by \cite[Theorem 10.5.1]{Levine:slices} and \cite[Theorem 6.6]{Voevodsky:zero-slice}.
This extends by base change;
every field is essentially smooth over a perfect field \cite[Lemma 2.9]{HKPAO}, 
and \cite[Lemma 2.7(1)]{HKPAO} verifies the hypothesis of \cite[Theorem 2.12]{pp:functoriality} for an essentially smooth map.
To conclude the proof we use Lemma \ref{lemma:orientedetaequivalence} and the canonical orientation on $\MZ$ \cite[\S10]{NSOdm}. 
\end{proof}

\begin{corollary}
\label{corollary:Ktheoryetacomplete}
Algebraic $K$-theory $\KGL$ and its effective cover $\kgl$ are $\eta$-complete.
\end{corollary}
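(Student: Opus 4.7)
The plan is to deduce both claims from Lemma~\ref{lemma:orientedetaequivalence}, which reduces $\eta$-completeness to the existence of a module structure over an oriented motivic ring spectrum. For $\KGL$ this is immediate: there is a canonical orientation $\MGL \to \KGL$ (see, e.g., \cite{NSOfieldsinstitute}, \cite{PPRhha}), so $\KGL$ is a module over itself as an oriented motivic ring spectrum, and Lemma~\ref{lemma:orientedetaequivalence} yields $\eta$-completeness.

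For the effective cover $\kgl$, my strategy is to transport the orientation from $\KGL$. Since $\MGL$ lies in $\SH^{\eff}$, the orientation $\MGL \to \KGL$ factors uniquely through the canonical map $\kgl \to \KGL$ by the universal property of the effective cover, producing a map $\MGL \to \kgl$. I would then promote this to an $\MGL$-module structure on $\kgl$ as follows: $\SH^{\eff}$ is closed under smash products, so $\MGL \wedge \kgl$ is effective, and hence the candidate action $\MGL \wedge \kgl \to \MGL \wedge \KGL \to \KGL$ factors uniquely through $\kgl$. The unit and associativity diagrams for this action then commute by the same uniqueness principle, so $\kgl$ is a genuine $\MGL$-module, and Lemma~\ref{lemma:orientedetaequivalence} delivers $\eta$-completeness of $\kgl$.

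The main potential obstacle is verifying these module axioms for $\kgl$ coherently rather than merely up to homotopy. This is most cleanly handled by working in a strict model of the slice filtration where $\f_{0}$ is lax symmetric monoidal, as in \cite{Pelaez} or \cite[\S3.1]{GRSO}; in that setting the effective cover of an oriented motivic ring spectrum is again an oriented motivic ring spectrum, and Lemma~\ref{lemma:orientedetaequivalence} applies to $\kgl$ viewed as a module over itself without further bookkeeping.
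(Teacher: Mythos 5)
Your proof is correct and follows essentially the same route as the paper: deduce $\eta$-completeness from Lemma~\ref{lemma:orientedetaequivalence} via the orientation $\MGL \to \KGL$, and use effectiveness of $\MGL$ to transport the $\MGL$-module structure to $\kgl$. The paper's proof is just terser, citing the orientation map and the effectiveness of $\MGL$ without spelling out the factorization through the effective cover or the coherence issue you correctly flag and resolve via a lax monoidal model of $\f_0$.
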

\begin{proof}
This follows from Lemma \ref{lemma:orientedetaequivalence} by using the orientation map $\MGL\longrightarrow\KGL$, 
see for example \cite[Example 2.4]{PPR} and \cite[Examples 2.1, 2.2]{SObott}, 
and the geometric fact that the algebraic cobordism spectrum is effective \cite[Corollary 3.2]{Spitzweck}, \cite[\S8]{voevodsky.open}.
\end{proof}
\begin{corollary}
\label{corollary:homotopyfixedpointsKtheoryetacomplete}
The homotopy fixed points spectra $\KGL^{hC_{2}}$ and $\kgl^{hC_{2}}$ are $\eta$-complete.
\end{corollary}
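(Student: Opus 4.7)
The plan is to observe that $\eta$-completeness is preserved under homotopy limits, and then to note that homotopy fixed points are, by construction, a homotopy limit.

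First I would verify that the functor $\E\mapsto\E/\eta^n$ commutes with homotopy limits in $\E$. The defining cofiber sequence
\[
\G^{\wedge n}\wedge\E\xrightarrow{\eta^n}\E\to\E/\eta^n
\]
is equivalently a fiber sequence in the stable category $\SH$. Since $\G$ is invertible in $\SH$, the functor $\G^{\wedge n}\wedge(-)$ preserves arbitrary homotopy limits; so does the identity. Fibers in a stable category are themselves homotopy limits, and homotopy limits commute with homotopy limits, so $(\holim_I\E_i)/\eta^n\simeq\holim_I(\E_i/\eta^n)$ for any small diagram $\{\E_i\}_{i\in I}$ of motivic spectra.

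Second, suppose each $\E_i$ is $\eta$-complete. Interchanging the two homotopy limits gives
\[
\holim_I\E_i\simeq\holim_I\holim_n\E_i/\eta^n\simeq\holim_n\holim_I\E_i/\eta^n\simeq\holim_n(\holim_I\E_i)/\eta^n,
\]
so $\holim_I\E_i$ is $\eta$-complete as well.

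Finally, the spectra $\KGL^{hC_2}$ and $\kgl^{hC_2}$ are, by construction, homotopy limits of cosimplicial diagrams whose values are (products of) copies of $\KGL$ and $\kgl$ respectively. By Corollary \ref{corollary:Ktheoryetacomplete}, $\KGL$ and $\kgl$ are $\eta$-complete, and the preceding paragraph therefore yields the desired $\eta$-completeness of $\KGL^{hC_2}$ and $\kgl^{hC_2}$. The only substantive point is the commutation of $\cdot/\eta^n$ with homotopy limits, and this is immediate from invertibility of $\G$ in $\SH$; the rest is bookkeeping with the Fubini property for homotopy limits.
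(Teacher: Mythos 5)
Your proof is correct and follows essentially the same route as the paper: express each of $\KGL$ and $\kgl$ as $\holim_{n}\E/\eta^{n}$ using Corollary \ref{corollary:Ktheoryetacomplete}, interchange that homotopy limit with $\holim_{C_{2}}$, and identify the result with the $\eta$-completion of $\E^{hC_{2}}$. The one place you are slightly more careful than the paper is in explicitly verifying that $(-)/\eta^{n}$ commutes with homotopy limits (via invertibility of $\G$ and stability), a step the paper subsumes under ``commuting homotopy limits over small categories.''
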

\begin{proof}
Let $\E$ be short for $\KGL$ or $\kgl$.
We use homotopy limits to model the homotopy fixed points $\E^{hC_{2}}$ for the $C_{2}$-action given by the Adams operation $\Psi^{-1}$ \cite[\S18]{Hirschhorn}, \cite[\S3,4]{R-O}.
Corollary \ref{corollary:Ktheoryetacomplete} implies there is an isomorphism
\begin{equation*}
\underset{C_{2}}{\holim}\;\E
\overset{\simeq}{\longrightarrow}
\underset{C_{2}}{\holim}\;
\underset{n\rightarrow\infty}{\holim}\;\E/\eta^{n}.
\end{equation*}
Thus the corollary follows by commuting homotopy limits over small categories, 
i.e., 
\begin{equation*}
\underset{C_{2}}{\holim}\;
\underset{n\rightarrow\infty}{\holim}\;\E/\eta^{n}
\overset{\simeq}{\longrightarrow}
\underset{n\rightarrow\infty}{\holim}\;
\underset{C_{2}}{\holim}\;\E/\eta^{n}.
\end{equation*}
\end{proof}

The $q$th effective cocover $\f^{q-1}(\E)$ of $\E$ is uniquely determined up to isomorphism by the distinguished triangle
\begin{equation}
\label{equation:effectivecocoverdefinition}
\f_{q}(\E)
\longrightarrow 
\E
\longrightarrow 
\f^{q-1}(\E).
\end{equation}
Note that $\f^{q-1}(\E)$ is a $(q-1)$-coeffective motivic spectrum, 
i.e., 
it is an object of the right orthogonal subcategory of $\Sigma^{q}_{T}\SH^{\eff}$.
If $q\leq q'$ the isomorphism 
\begin{equation}
\label{equation:effectiveiso}
\s_{q'}\f_{q}(\E)\overset{\simeq}{\longrightarrow}\s_{q'}(\E)
\end{equation}
implies $\s_{q'}\f^{q-1}(\E)\simeq\ast$.
When $q=0$, 
\eqref{equation:effectivecocoverdefinition} yields a distinguished triangle for the effective cover $\e\in\SH^{\eff}$ of $\E$, 
i.e., 
\begin{equation}
\label{equation:effectivecoverdefinition}
\e
\longrightarrow 
\E
\longrightarrow 
\f^{-1}(\E).
\end{equation}
We note that all the nonnegative slices of the coeffective motivic spectrum $\f^{-1}(\E)$ are trivial.

\begin{lemma}
\label{lemma:nonegativeslices}
If $\E$ has no nontrivial negative slices then $\E\in\SH^{\eff}$.
\end{lemma}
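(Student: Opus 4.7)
The plan is to leverage the exhaustivity of the slice filtration from Lemma \ref{lemma:slicefiltrationcomplete} to collapse the tower of effective covers onto $\f_0(\E)$.

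First, I would unpack the hypothesis. ``No nontrivial negative slices'' means $\s_q(\E) \simeq \ast$ for every $q < 0$. Feeding this into the defining distinguished triangle \eqref{equation:slicedefinition}, namely $\f_{q+1}(\E) \to \f_q(\E) \to \s_q(\E)$, shows that the transition map $\f_{q+1}(\E) \to \f_q(\E)$ is an isomorphism for all $q < 0$. Iterating, the entire tower
\begin{equation*}
\cdots \longrightarrow \f_{-2}(\E) \longrightarrow \f_{-1}(\E) \longrightarrow \f_0(\E)
\end{equation*}
consists of isomorphisms, so its homotopy colimit as $q \to -\infty$ is canonically identified with $\f_0(\E)$.

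Second, I would apply Lemma \ref{lemma:slicefiltrationcomplete}, which identifies this same homotopy colimit with $\E$. Combining the two identifications gives an isomorphism $\E \simeq \f_0(\E)$. Since $\f_0(\E)$ lies in $\Sigma^0_T\SH^{\eff} = \SH^{\eff}$ by the very definition of the zeroth effective cover, this places $\E$ in $\SH^{\eff}$, completing the argument.

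There is no serious obstacle here; the whole content of the statement is that exhaustivity plus vanishing of all negative slice quotients forces the filtration to stabilize at $\f_0(\E)$. The only point worth double-checking is that $\f_0(\E)$ is genuinely effective (not merely $0$-connective in some weaker sense), but this is built into the definition of the slice filtration via the universal property of $\f_q(\E) \to \E$ from $\Sigma^q_T\SH^{\eff}$ recalled just after \eqref{equation:slicedefinition}.
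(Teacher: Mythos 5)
Your proof is correct and matches the paper's approach: both combine the slice triangles \eqref{equation:slicedefinition} with exhaustivity \eqref{equation:exhaustive} and the definition of the effective cover (the paper phrases its one-line conclusion as $\f^{-1}(\E)\simeq\ast$ via \eqref{equation:effectivecoverdefinition}, which is equivalent to your $\E\simeq\f_0(\E)$). One small notational slip: the transition maps point from $\f_{q+1}(\E)$ to $\f_q(\E)$, so the direct system whose homotopy colimit you take as $q\to-\infty$ should read $\f_0(\E)\to\f_{-1}(\E)\to\f_{-2}(\E)\to\cdots$ rather than the reverse; the substance of the argument is unaffected.
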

\begin{proof}
Using \eqref{equation:slicedefinition}, \eqref{equation:exhaustive} and \eqref{equation:effectivecoverdefinition} it follows that $\f^{-1}(\E)\simeq\ast$.
\end{proof}

\begin{lemma}
\label{lemma:sliceiso}
Suppose $\E\rightarrow\FF$ induces an isomorphism $\s_{q}\f^{q-1}(\E)\overset{\simeq}{\rightarrow}\s_{q}\f^{q-1}(\FF)$ for all $q\in\Z$.
Then there is a naturally induced isomorphism $\f^{q-1}(\E)\overset{\simeq}{\rightarrow}\f^{q-1}(\FF)$.
\end{lemma}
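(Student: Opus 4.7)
The strategy is to show that the cofiber of the induced map $\f^{q-1}(\E) \to \f^{q-1}(\FF)$ is trivial by verifying that all its slices vanish, then invoking exhaustiveness of the slice filtration. Let $\caC$ denote this cofiber. Since $\f^{q-1}(\E)$ and $\f^{q-1}(\FF)$ both lie in the right orthogonal of $\Sigma^q_T \SH^{\eff}$, which is a triangulated subcategory, so does $\caC$. In particular $\f_q(\caC) \simeq \ast$, and consequently $\s_{q'}(\caC) \simeq \ast$ for every $q' \geq q$.

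Next I would handle the slices $\s_{q'}(\caC)$ with $q' < q$. Applying $\s_{q'}$ to the defining triangle $\f_q(\E) \to \E \to \f^{q-1}(\E)$ and using $\s_{q'}\f_q(\E) \simeq \ast$ when $q' < q$ --- which holds because $\f_q(\E) \in \Sigma^q_T \SH^{\eff}$ --- I identify $\s_{q'}\f^{q-1}(\E) \simeq \s_{q'}(\E)$, and similarly for $\FF$. Combined with the hypothesized slice isomorphisms, this forces $\s_{q'}(\caC) \simeq \ast$ for all $q' < q$. Hence every slice of $\caC$ vanishes.

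To conclude, I would run a downward induction on $p$, starting from $\f_q(\caC) \simeq \ast$ and using the slice triangle $\f_{p+1}(\caC) \to \f_p(\caC) \to \s_p(\caC)$, to obtain $\f_p(\caC) \simeq \ast$ for every $p \in \Z$. Exhaustiveness of the slice filtration (Lemma \ref{lemma:slicefiltrationcomplete}) then yields
\[
\caC \simeq \underset{p\to -\infty}{\hocolim}\;\f_p(\caC) \simeq \ast,
\]
so the map $\f^{q-1}(\E) \to \f^{q-1}(\FF)$ is an isomorphism.

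The main potential obstacle is bookkeeping the slice indices correctly and checking that the hypothesis indeed yields $\s_{q'}(\caC) \simeq \ast$ across the full range $q' \in \Z$; the splitting into the coeffective range $q' \geq q$ and the effective range $q' < q$ is what makes the argument go through. Beyond that the proof is essentially formal --- a standard triangulated-category reduction combined with the exhaustiveness of the slice filtration.
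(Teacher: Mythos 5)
Your proposal is correct and takes essentially the same approach as the paper: the paper's one-line proof invokes the slice triangles \eqref{equation:slicedefinition} and exhaustiveness \eqref{equation:exhaustive} applied to the cocovers, which is exactly the downward induction on $\f_{p}$ plus the colimit argument you run, just phrased directly on $\f_{p}\f^{q-1}(\E)\to\f_{p}\f^{q-1}(\FF)$ rather than on the cofiber $\caC$. Your bookkeeping (coeffectivity giving $\f_{q}(\caC)\simeq\ast$ and $\s_{q'}(\caC)\simeq\ast$ for $q'\geq q$, and the identification $\s_{q'}\f^{q-1}(\E)\simeq\s_{q'}(\E)$ for $q'<q$ handling the rest) is exactly right.
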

\begin{proof}
This follows by applying \eqref{equation:slicedefinition} and \eqref{equation:exhaustive} to the effective cocovers.
\end{proof}

\begin{lemma}
\label{lemma:negativedt}
For $n>0$ there is a distinguished triangle
\begin{equation}
\label{equation:negativedt}
\f_{-n+1}\f^{-1}(\E)
\longrightarrow
\f_{-n}\f^{-1}(\E)
\longrightarrow
\s_{-n}(\E).
\end{equation}
It follows that $\f_{-n}\f^{-1}(\E)$ is a finite extension of the negative slices of $\E$.
\end{lemma}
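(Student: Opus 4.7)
The plan is to extract \eqref{equation:negativedt} from the defining slice triangle \eqref{equation:slicedefinition} applied at $q=-n$ to the coeffective motivic spectrum $\f^{-1}(\E)$. This gives
\begin{equation*}
\f_{-n+1}\f^{-1}(\E)\longrightarrow\f_{-n}\f^{-1}(\E)\longrightarrow\s_{-n}\f^{-1}(\E),
\end{equation*}
so it remains to identify the last term with $\s_{-n}(\E)$.

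To do so, I would apply the triangulated functor $\s_{-n}$ to the effective cover triangle \eqref{equation:effectivecoverdefinition}, producing
\begin{equation*}
\s_{-n}(\e)\longrightarrow\s_{-n}(\E)\longrightarrow\s_{-n}\f^{-1}(\E),
\end{equation*}
and then verify that $\s_{-n}(\e)\simeq\ast$ for $n\geq 1$. The latter is immediate from \eqref{equation:slicefiltration}: since $\e\in\SH^{\eff}=\Sigma^{0}_{T}\SH^{\eff}\subseteq\Sigma^{-n+1}_{T}\SH^{\eff}$, both $\f_{-n+1}(\e)$ and $\f_{-n}(\e)$ coincide with $\e$, so $\s_{-n}(\e)$ is the cofiber of an identity. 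Substituting the resulting isomorphism $\s_{-n}\f^{-1}(\E)\simeq\s_{-n}(\E)$ into the first triangle yields \eqref{equation:negativedt}.

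For the second assertion I would induct on $n\geq 1$. The base case uses that $\f_0\f^{-1}(\E)\simeq\ast$; this follows by applying $\f_0$ to \eqref{equation:effectivecoverdefinition} and noting that $\f_0\f_0\simeq\f_0$. Plugging this vanishing into \eqref{equation:negativedt} at $n=1$ gives $\f_{-1}\f^{-1}(\E)\simeq\s_{-1}(\E)$. For the inductive step, the triangle \eqref{equation:negativedt} directly exhibits $\f_{-n}\f^{-1}(\E)$ as an extension of $\f_{-n+1}\f^{-1}(\E)$ --- which by hypothesis is a finite extension of $\s_{-1}(\E),\dots,\s_{-n+1}(\E)$ --- by the single slice $\s_{-n}(\E)$.

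The argument is purely formal within the slice tower and the only genuine point of the proof is the vanishing $\s_{-n}(\e)\simeq\ast$ for $n>0$, which reduces to the idempotence $\f_q\f_q\simeq\f_q$ together with the containment $\Sigma^{0}_{T}\SH^{\eff}\subseteq\Sigma^{-n+1}_{T}\SH^{\eff}$. No extra hypothesis on $F$ is required.
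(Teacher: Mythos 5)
Your proposal is correct and follows essentially the same route as the paper: the paper organizes the slice triangle for $\f^{-1}(\E)$ and the effective cover triangle into a single $3\times 3$ diagram of distinguished triangles, while you present them sequentially and then substitute; the key inputs (the vanishing $\s_{-n}(\e)\simeq\ast$ for $n>0$, the vanishing $\f_{0}\f^{-1}(\E)\simeq\ast$ from idempotence of $\f_{0}$, and induction on $n$) are identical.
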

\begin{proof}
This follows from the distinguished triangles:
\begin{equation}
\label{equation:3by3diagram}
\xymatrix{
\f_{-n+1}(\e) \ar[r] \ar[d] &
\f_{-n+1}(\E) \ar[r] \ar[d] &
\f_{-n+1}\f^{-1}(\E) \ar[d] \\
\f_{-n}(\e) \ar[r] \ar[d] &
\f_{-n}(\E) \ar[r] \ar[d] &
\f_{-n}\f^{-1}(\E) \ar[d] \\
\s_{-n}(\e) \ar[r] &
\s_{-n}(\E) \ar[r] &
\s_{-n}\f^{-1}(\E) 
}
\end{equation}
In \eqref{equation:3by3diagram} the slice $\s_{-n}(\e)\simeq\ast$ by the assumption $n>0$.
This implies \eqref{equation:negativedt}.
We note the effective cover $\f_{0}\f^{-1}(\E)\simeq\ast$ since the map $\f_{0}(\e)\longrightarrow\f_{0}(\E)$ is an isomorphism.
It follows that $\f_{-1}\f^{-1}(\E)$ is isomorphic to $\s_{-1}(\E)$. 
The conclusion follows from \eqref{equation:negativedt} by induction on $n$.
\end{proof}

\begin{remark}
For $n>0$, 
$\f_{-n}\f^{-1}(\E)$ is $\eta$-complete by Lemmas \ref{lemma:slicesareetacomplete} and \ref{lemma:negativedt}.
\end{remark}

\begin{lemma}
\label{lemma:negativesliceisomorphism}
For $n\geq -q>0$ there are isomorphisms 
\begin{equation}
\label{equation:negativesliceisomorphism}
\s_{q}(\E)
\overset{\simeq}{\longrightarrow}
\s_{q}\f^{-1}(\E)
\overset{\simeq}{\longleftarrow}
\s_{q}\f_{-n}\f^{-1}(\E).
\end{equation}
\end{lemma}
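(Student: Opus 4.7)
The plan is to obtain the two isomorphisms separately, by feeding the hypothesis $n \geq -q > 0$ (equivalently $-n \leq q < 0$) into results already collected in Section \ref{section:sf}.

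For the left isomorphism $\s_{q}(\E) \overset{\simeq}{\longrightarrow} \s_{q}\f^{-1}(\E)$, I would apply the functor $\s_{q}$ to the effective-cover distinguished triangle \eqref{equation:effectivecoverdefinition},
\[
\s_{q}(\e) \longrightarrow \s_{q}(\E) \longrightarrow \s_{q}\f^{-1}(\E),
\]
and show that the leftmost term vanishes. Since $\e \in \SH^{\eff} = \Sigma^{0}_{T}\SH^{\eff}$, the effective cover satisfies $\f_{q}(\e)\simeq \e$ for every $q\le 0$ and in particular for $q$ and $q+1$ (which are $\leq 0$ because $q<0$). Consequently $\s_{q}(\e) = \f_{q}(\e)/\f_{q+1}(\e) \simeq \ast$, forcing the middle map to be an isomorphism.

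For the right isomorphism $\s_{q}\f_{-n}\f^{-1}(\E) \overset{\simeq}{\longrightarrow} \s_{q}\f^{-1}(\E)$, I would invoke \eqref{equation:effectiveiso} directly. That identity says $\s_{q'}\f_{q''}(\FF) \simeq \s_{q'}(\FF)$ whenever $q''\le q'$; setting $q'' = -n$, $q' = q$, and $\FF = \f^{-1}(\E)$, the hypothesis $-n \leq q$ is exactly the condition $q''\le q'$, and the conclusion is the desired isomorphism.

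There is no real obstacle here; the only thing to be careful about is keeping the two inequalities $q<0$ and $-n \le q$ straight, so that each of the two building blocks, \eqref{equation:effectivecoverdefinition} and \eqref{equation:effectiveiso}, is applied in its valid range. Both constituent facts have been established earlier in the section, so the proof is just a matter of assembling them.
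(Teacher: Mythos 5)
Your proposal is correct and follows essentially the same route as the paper: the paper's proof also rests on the vanishing $\s_{q}(\e)\simeq\ast$ for $q<0$ (applied to the triangle \eqref{equation:effectivecoverdefinition}) and cites \eqref{equation:effectiveiso} for the $\f_{-n}$ isomorphism. You merely spell out why $\s_{q}(\e)\simeq\ast$ (via $\f_{q}(\e)\simeq\e$ for $q\le 0$), which the paper takes as understood.
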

\begin{proof}
Here we use that $\s_{q}(\e)\simeq\ast$ for $q<0$. 
The isomorphism for $\f_{-n}$ is a special case of \eqref{equation:effectiveiso}.
\end{proof}

For every $\E\in\SH$ the distinguished triangle \eqref{equation:effectivecocoverdefinition} yields a commutative diagram:
\begin{equation}
\label{equation:effectivediagram1}
\xymatrix{
\f_{q+1}(\E) \ar[r] \ar[d] &
\E \ar[r] \ar@{=}[d] &
\f^{q}(\E) \ar[d] \\ 
\f_{q}(\E) \ar[r] &
\E \ar[r] &
\f^{q-1}(\E) &
}
\end{equation}
The slice completion of $\E$ is defined as the homotopy limit 
\begin{equation}
\label{equation:slicecompletion}
\scc(\E)
\equiv 
\underset{q\rightarrow\infty}{\holim}\;\f^{q-1}(\E).
\end{equation}
Using \eqref{equation:effectivediagram1} and \eqref{equation:slicecompletion} we conclude there is a distinguished triangle
\begin{equation}
\label{equation:slicecompletedefinition}
\underset{q\rightarrow\infty}{\holim}\;\f_{q}(\E)
\longrightarrow 
\E
\longrightarrow 
\scc(\E).
\end{equation}
We say that $\E$ is {\it slice complete} if the homotopy limit $\underset{q\rightarrow\infty}{\holim}\;\f_{q}(\E)$ in \eqref{equation:slicecompletedefinition} is contractible. 

\begin{lemma}
\label{lemma:slicesareslicecomplete}
For every $\E\in\SH$, both $\s_{q}(\E)$ and $\f^{q}(\E)$ are slice complete for all $q\in\Z$.
%\begin{itemize}
%\item[(1)]
%$\s_{q}(\E)$ and $\f^{q}(\E)$ for all $q\in\Z$.
%\item[(2)]
%$\f^{q}\f_{q'}(\E)$ for all $q,q'\in\Z$.
%\end{itemize}
\end{lemma}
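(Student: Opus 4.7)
The plan is to prove that for $q'$ large enough (depending on $q$), both $\f_{q'}(\s_q(\E))$ and $\f_{q'}\f^q(\E)$ are contractible. Slice completeness then follows immediately from the definition \eqref{equation:slicecompletedefinition}, since the tower appearing in the homotopy limit is eventually the zero diagram. So the statement reduces to an eventual vanishing rather than any delicate convergence argument.

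For the slice $\s_q(\E)$, I would invoke the characterization recorded just after \eqref{equation:slicedefinition}, namely that every object of $\Sigma^{q+1}_T\SH^{\eff}$ maps trivially to $\s_q(\E)$. When $q'\geq q+1$, the inclusion $\Sigma^{q'}_T\SH^{\eff}\subseteq\Sigma^{q+1}_T\SH^{\eff}$ gives $\SH(Y,\s_q(\E))=0$ for every $Y\in\Sigma^{q'}_T\SH^{\eff}$. Since $\f_{q'}$ is right adjoint to the inclusion $\Sigma^{q'}_T\SH^{\eff}\hookrightarrow\SH$, this forces $\SH(Y,\f_{q'}(\s_q(\E)))=0$ for all such $Y$. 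Taking $Y=\f_{q'}(\s_q(\E))$ shows that its identity morphism is null, hence $\f_{q'}(\s_q(\E))\simeq\ast$.

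For $\f^q(\E)$, I would apply $\f_{q'}$ (with $q'\geq q+1$) to the defining triangle \eqref{equation:effectivecocoverdefinition}, producing
\[
\f_{q'}\f_{q+1}(\E)\longrightarrow\f_{q'}(\E)\longrightarrow\f_{q'}\f^q(\E).
\]
The standard identity $\f_{q'}\circ\f_{q+1}\simeq\f_{q'}$ for $q'\geq q+1$ --- an immediate consequence of the inclusion $\Sigma^{q'}_T\SH^{\eff}\subseteq\Sigma^{q+1}_T\SH^{\eff}$ and the universal property of effective covers, and already used in the proof of Lemma \ref{lemma:slicefiltrationcomplete} --- makes the first arrow an equivalence, forcing $\f_{q'}\f^q(\E)\simeq\ast$. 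Thus in both cases the system $\{\f_{q'}(\cdot)\}_{q'}$ is eventually zero and its homotopy limit is contractible. The argument is essentially formal; the only point requiring care is the adjunction-based translation from ``every object maps trivially'' to ``the effective cover is contractible,'' and the analogous iteration $\f_{q'}\circ\f_{q+1}\simeq\f_{q'}$, neither of which presents a genuine obstacle.
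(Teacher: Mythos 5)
Your argument is correct, and the half dealing with $\f^{q}(\E)$ is essentially the paper's: apply $\f_{q'}$ (for $q'>q$) to the triangle \eqref{equation:effectivecocoverdefinition} and use $\f_{q'}\f_{q+1}\simeq\f_{q'}$ to see that $\f_{q'}\f^{q}(\E)\simeq\ast$, so the tower is eventually constant at $\ast$ and its homotopy limit is contractible. For $\s_{q}(\E)$ your route differs slightly from the paper's. The paper first invokes the commutation $\f_{q'}\s_{q}\simeq\s_{q}\f_{q'}$ (recorded after \eqref{equation:slicedefinition}, citing GRSO) and then observes that $\s_{q}\f_{q'}(\E)$ sits in the triangle $\f_{q+1}\f_{q'}(\E)\to\f_{q}\f_{q'}(\E)\to\s_{q}\f_{q'}(\E)$, where the first map is an equivalence for $q<q'$, yielding $\f_{q'}\s_{q}(\E)\simeq\ast$. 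You instead argue directly from the adjunction: since every object of $\Sigma^{q+1}_{T}\SH^{\eff}\supseteq\Sigma^{q'}_{T}\SH^{\eff}$ maps trivially to $\s_{q}(\E)$, the universal property gives $\SH(Y,\f_{q'}\s_{q}(\E))=0$ for all $Y\in\Sigma^{q'}_{T}\SH^{\eff}$, and taking $Y=\f_{q'}\s_{q}(\E)$ kills the identity. Both arguments are formal and equally short; yours has the small advantage of not needing the commutation of $\f_{q'}$ with $\s_{q}$, while the paper's is symmetric in the sense that both halves of the lemma are read off from the same pair of distinguished triangles \eqref{equation:qqslicedefinition}.
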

\begin{proof}
If $q<q'$ there are distinguished triangles 
\begin{equation}
\label{equation:qqslicedefinition}
\f_{q+1}\f_{q'}(\E)
\overset{\simeq}{\longrightarrow}
\f_{q}\f_{q'}(\E)
\longrightarrow 
\s_{q}\f_{q'}(\E), \;
\f_{q'}\f_{q+1}(\E)
\overset{\simeq}{\longrightarrow}
\f_{q'}(\E)
\longrightarrow 
\f_{q'}\f^{q}(\E).
\end{equation}
%Now (1) follows since 
It follows that $\f_{q'}\s_{q}(\E)\simeq\s_{q}\f_{q'}(\E)\simeq\ast$ and $\f_{q'}\f^{q}(\E)\simeq\ast$ for $q<q'$ by \eqref{equation:qqslicedefinition}.
%The proof of (2) is left to the interested reader.
\end{proof}

\begin{lemma}
\label{lemma:Ktheoryslicecomplete}
Algebraic $K$-theory $\KGL$ and its effective cover $\kgl$ are slice complete.
\end{lemma}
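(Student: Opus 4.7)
The plan is to identify the effective covers of $\KGL$ and $\kgl$ explicitly via Bott periodicity, then verify that the resulting tower of suspensions of $\kgl$ has contractible homotopy limit by a slice spectral sequence vanishing argument.

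First, Bott periodicity $\Sigma^{2,1}\KGL\simeq\KGL$ together with $\kgl=\f_{0}\KGL\in\SH^{\eff}$ yields the explicit identification $\f_{q}(\KGL)\simeq\f_{q}(\kgl)\simeq\Sigma^{2q,q}\kgl$ for $q\geq 0$: the spectrum $\Sigma^{2q,q}\kgl$ lies in $\Sigma^{q}_{T}\SH^{\eff}$, and the cofiber of the iterated Bott map $\beta^{q}\colon\Sigma^{2q,q}\kgl\to\kgl$ admits a filtration with graded pieces $\Sigma^{2i,i}\HZ$ for $0\leq i<q$, each lying in $(\Sigma^{i+1}_{T}\SH^{\eff})^{\perp}\subseteq(\Sigma^{q}_{T}\SH^{\eff})^{\perp}$; the cofiber of $\kgl\to\KGL$ is likewise in $(\SH^{\eff})^{\perp}\subseteq(\Sigma^{q}_{T}\SH^{\eff})^{\perp}$. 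For $q\leq 0$ both effective covers agree with the spectrum itself, so slice completeness for $\KGL$ and $\kgl$ reduces to the single assertion
\[
\underset{q\rightarrow\infty}{\holim}\;\Sigma^{2q,q}\kgl\simeq\ast.
\]

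By the Milnor exact sequence and the fact that $\SH$ is compactly generated by $\Sigma^{s,t}X_{+}$ with $X\in\Sm_{F}$ and $s,t\in\Z$, it suffices to show $[\Sigma^{s,t}X_{+},\Sigma^{2q,q}\kgl]\cong\kgl^{2q-s,\,q-t}(X)$ vanishes for $q$ sufficiently large. The strongly convergent slice spectral sequence for $\kgl$-cohomology of $X$ has $E_{1}$-terms $H^{2q-s+2r,\,q-t+r}(X,\Z)$ indexed by slice level $r\geq 0$. For $X$ smooth of dimension $d$, motivic cohomology $H^{P,Q}(X,\Z)$ vanishes outside the range $0\leq P\leq Q+d$ with $Q\geq 0$, which forces the constraint $r\leq s-t+d-q$; this admits no non-negative integer once $q>s-t+d$, so every $E_{1}$-term, hence the abutment, is zero.

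The main technical input is strong convergence of the slice spectral sequence for $\kgl$, an instance of Levine's convergence theorem over a perfect base field, extended to an arbitrary field $F$ by essentially smooth base change as in the proof of Lemma~\ref{lemma:slicesareetacomplete}. Once the vanishing holds, both $\lim_{q}$ and $\lim^{1}_{q}$ of the tower $\{[\Sigma^{s,t}X_{+},\Sigma^{2q,q}\kgl]\}$ vanish for every compact generator, yielding $\underset{q\rightarrow\infty}{\holim}\;\Sigma^{2q,q}\kgl\simeq\ast$ and hence slice completeness of both $\KGL$ and $\kgl$.
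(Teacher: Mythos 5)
Your proof reaches the correct conclusion, but by a genuinely different route than the paper's, and with a heavier input whose role deserves scrutiny.

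The paper's proof is two sentences: the Nisnevich sheaves $\underline{\pi}_{p,q}\KGL$ vanish for $p<2q$ by Bott periodicity, a connectivity-propagation lemma from \cite{roendigs-spitzweck-oestvaer.slices-sphere} then shows $\f_{q}(\KGL)$ is $q$-connected in the sense that $[\Sigma^{s,t}X_{+},\f_{q}(\KGL)]=0$ for $s-t+d<q$ and $\dim X\leq d$, and the conclusion follows from Milnor's exact sequence. Your route first identifies $\f_q(\KGL)=\f_q(\kgl)\simeq\Sigma^{2q,q}\kgl$ for $q\geq0$ via Bott periodicity and the orthogonality properties of slices --- this part is correct and carefully argued --- and then invokes strong convergence of the slice spectral sequence for $\kgl$-cohomology to deduce vanishing of $\kgl^{2q-s,q-t}(X)$ for $q$ large from the dimension bound $H^{P,Q}(X,\mathbf{Z})=0$ for $P>Q+d$. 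The numerics of that $E_1$-vanishing argument check out.

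Two cautions. First, strong convergence of the slice spectral sequence to $\kgl^{P,Q}(X)$ (rather than to $\scc(\kgl)^{P,Q}(X)$) is not an innocent input: conditional convergence to the non-completed abutment \emph{is} slice completeness at $X$, so on its face your argument assumes what it sets out to prove. What rescues it is that Levine's comparison of the slice tower with the homotopy coniveau tower gives convergence by an independent geometric argument about codimension of supports --- but once you invoke that comparison, slice completeness follows outright and the detour through $E_1$-vanishing is redundant. Second, the extension from a perfect subfield to arbitrary $F$ "by essentially smooth base change" needs more care than the analogous step for slices: vanishing of homotopy groups transfers along filtered colimits, but homotopy limits do not commute with them, so the base-change step must be run at the level of the vanishing statements for each $[\Sigma^{s,t}X_{+},\f_q(\KGL)]$ rather than on $\holim_q\f_q(\KGL)$ itself. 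The paper's connectivity argument sidesteps both points.

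Two minor slips: $\f_q(\KGL)\neq\KGL$ for $q\leq 0$ (indeed $\f_0(\KGL)=\kgl$), though this is harmless since only $q\to\infty$ matters; and in the paper's notation the graded pieces of the cofiber of $\beta^q$ are suspensions of $\MZ$, not $\HZ$.
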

\begin{proof}
It suffices to consider $\KGL$.  
The associated Nisnevich sheaf of homotopy groups $\underline{\pi}_{p,q}\KGL$ is trivial when $p<2q$.
Hence $\f_{q}(\KGL)$ is $q$-connected by \cite[Lemma 3.17]{roendigs-spitzweck-oestvaer.slices-sphere}, 
i.e., 
for every triple $(s,t,d)$ of integers with $s-t+d<q$ and every $X\in\Sm_{F}$ of dimension $\leq d$, 
the group $[\Sigma^{s,t}X_{+},\f_{q}(\KGL)]$ is trivial. 
We conclude by letting $q\rightarrow \infty$.
\end{proof}

From Lemma \ref{lemma:Ktheoryslicecomplete} we deduce isomorphisms
\begin{equation*}
\KGL^{hC_{2}}
\overset{\simeq}{\longrightarrow}
%\underset{C_{2}}{\holim}\;\scc(\KGL)=
\scc(\KGL)^{hC_{2}}, 
\kgl^{hC_{2}}
\overset{\simeq}{\longrightarrow}
%\underset{C_{2}}{\holim}\;\scc(\kgl)=
\scc(\kgl)^{hC_{2}}.
\end{equation*}
However, 
it is unclear whether $\KGL^{hC_{2}}$ and $\kgl^{hC_{2}}$ are slice complete because homotopy fixed points need not commute with effective cocovers or equivalently with effective covers. 
To emphasize this issue we construct an example in \S\ref{section:appendix}, 
see also Proposition \ref{proposition:hfpslices}.

For $n>0$ there is a naturally induced distinguished triangle
\begin{equation}
\label{equation:longeffectivecocoverdefinition}
\e
\longrightarrow 
\f_{-n}(\E)
\longrightarrow 
\f_{-n}\f^{-1}(\E).
\end{equation}
Lemma \ref{lemma:slicefiltrationcomplete}, \eqref{equation:effectivecoverdefinition}, and \eqref{equation:longeffectivecocoverdefinition} imply there is a naturally induced isomorphism
\begin{equation}
\label{equation:positivemap}
\underset{n>0}{\hocolim}\;\f_{-n}\f^{-1}(\E)
\overset{\simeq}{\longrightarrow}
\f^{-1}(\E).
\end{equation}

Next we make precise the vagary of identifying the homotopy fixed points $\f^{-1}(\KGL)^{hC_{2}}$ with a homotopy colimit.
That is, 
we identify a homotopy limit with a homotopy colimit.
Throughout we let $\E$ be a motivic spectrum equipped with a $G$-action for a finite group $G$.

\begin{lemma}
\label{lemma:hfphocolim}
There is a natural isomorphism
\begin{equation}
\label{equation:positivehfpmap}
\underset{n>0}{\hocolim}\;\f_{-n}\f^{-1}(\E)^{hG}
\overset{\simeq}{\longrightarrow}
\f^{-1}(\E)^{hG}.
\end{equation}
\end{lemma}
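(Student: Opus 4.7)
The map \eqref{equation:positivehfpmap} arises by applying $(-)^{hG}$ and then $\hocolim_{n}$ to the cofiber sequence
\[
\f_{-n}\f^{-1}(\E)\longrightarrow\f^{-1}(\E)\longrightarrow\f^{-n-1}\f^{-1}(\E);
\]
since $\hocolim$ is exact and the middle term is independent of $n$, the cofiber of \eqref{equation:positivehfpmap} is $\hocolim_{n}(\f^{-n-1}\f^{-1}(\E))^{hG}$. My plan is to show this cofiber vanishes by a bidegree-wise analysis of bigraded homotopy groups, exploiting that the slices of $\E$ are highly structured motivic Eilenberg--MacLane spectra.

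By iterating Lemma \ref{lemma:negativedt}, $\f^{-n-1}\f^{-1}(\E)$ is an extension built from the slices $\s_{-k}(\E)$ with $k\geq n+1$. Each such slice is an $\MZ$-module in $\Sigma^{-k}_{T}\SH^{\eff}$ as used in the proof of Lemma \ref{lemma:slicesareetacomplete}; being a weight-$(-k)$ motivic Eilenberg--MacLane spectrum, its bigraded homotopy groups satisfy $\pi_{p,q}\s_{-k}(\E)=0$ whenever $q>-k$. Consequently, for any fixed bidegree $(p,q)$ only slices with $k\leq -q$ can contribute, and repeated use of the long exact sequences attached to \eqref{equation:negativedt} yields $\pi_{p,q}\f^{-n-1}\f^{-1}(\E)=0$ for every $p$ whenever $n\geq\max(0,-q)$.

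Fixing such $(p,q)$ and $n$, the homotopy fixed point spectral sequence
\[
E_{2}^{s,t}=H^{s}\bigl(G;\pi_{t,q}\f^{-n-1}\f^{-1}(\E)\bigr)\Longrightarrow\pi_{t-s,q}\bigl(\f^{-n-1}\f^{-1}(\E)\bigr)^{hG}
\]
has identically zero $E_{2}$-page in the $q$-strip. Since $G$ is finite this is a half-plane spectral sequence that converges strongly bidegree-by-bidegree, so $\pi_{p,q}(\f^{-n-1}\f^{-1}(\E))^{hG}=0$ for all $p$ and all $n\geq\max(0,-q)$. Because sequential $\hocolim$ commutes with $\pi_{p,q}$, this gives $\pi_{p,q}\hocolim_{n}(\f^{-n-1}\f^{-1}(\E))^{hG}=0$ in every bidegree, so the cofiber of \eqref{equation:positivehfpmap} is contractible and the lemma follows.

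The main obstacle is the weight-vanishing $\pi_{p,q}\s_{-k}(\E)=0$ for $q>-k$ together with strong convergence of the motivic HFPSS. The former rests on the structural description of slices as weight-$q$ $\MZ$-modules recalled in Lemma \ref{lemma:slicesareetacomplete}; the latter is clean here because once the $E_{2}$-page vanishes uniformly in a weight-strip, no comparison of towers of spectral sequences is needed, and the finiteness of $G$ bounds the spectral sequence to a half-plane where conditional convergence suffices.
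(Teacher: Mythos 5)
Your proposal is essentially correct but takes a more roundabout route than the paper, substituting the motivic homotopy fixed point spectral sequence and a slice analysis for a single formal observation. The paper argues directly on generators: for a compact generator $\Sigma^{s,t}X_{+}$ with $t<0$ it uses the triangle $\f_{t}\f^{-1}(\E)^{hG}\to\f^{-1}(\E)^{hG}\to\f^{t-1}(\E)^{hG}$ and the fact that $\f^{t-1}(\E)^{hG}$ is $(t-1)$-\emph{coeffective} --- because $\f^{t-1}(\E)$ is, and homotopy limits preserve the coeffective (right-orthogonal) subcategory --- so $[\Sigma^{s,t}X_{+},\f^{t-1}(\E)^{hG}]=0$ and the colimit stabilizes at $\f_{t}\f^{-1}(\E)^{hG}$; the case $t\geq 0$ is the same vanishing. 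Your argument unwinds the same weight-graded orthogonality: your vanishing $\pi_{p,q}\s_{-k}(\E)=0$ for $q>-k$ and $\pi_{p,q}\f^{-n-1}\f^{-1}(\E)=0$ for $n\geq -q$ are exactly restatements of coeffectivity (no appeal to the Eilenberg--MacLane structure of the slices is needed, and indeed $\f^{-n-1}\f^{-1}(\E)$ is not a \emph{finite} extension of slices, only a sequential homotopy colimit of such), and then you push the vanishing through the HFPSS, while the paper just notes that homotopy limits preserve the coeffective subcategory and gets the same conclusion at the level of $(-)^{hG}$ immediately. Two points you should tighten: (i) to conclude that a spectrum in $\SH$ is contractible you must control $[\Sigma^{s,t}X_{+},-]$ for \emph{all} $X\in\Sm_{F}$, not merely the coefficient groups $\pi_{p,q}$ over the base point; the same coeffectivity bound handles this with no change, but the written argument only checks the base point. (ii) ``Converges strongly bidegree-by-bidegree'' is more than the argument gives or needs; an identically vanishing $E_{2}$-page in a fixed weight forces the abutment to vanish already from conditional convergence (or, more simply, from the Milnor sequence for the tower of finite skeleta of $\mathsf{E}G$, each stage of which is a finite extension of $\f^{-n-1}\f^{-1}(\E)$ because $G$ is finite). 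With these caveats the proof goes through, but the paper's use of coeffectivity being preserved by $(-)^{hG}$ avoids the spectral sequence entirely.
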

\begin{proof}
For every generator $\Sigma^{s,t}X_{+}$ of $\SH$ there is a canonically induced map
\begin{equation}
\label{equation:positivehfpmapgenerator}
\SH(\Sigma^{s,t}X_{+},\underset{n>0}{\hocolim}\;\f_{-n}\f^{-1}(\E)^{hG})
\longrightarrow
\SH(\Sigma^{s,t}X_{+},\f^{-1}(\E)^{hG}).
\end{equation}
If $t\geq 0$ the source and target of \eqref{equation:positivehfpmapgenerator} are trivial.
If $t<0$ we show \eqref{equation:positivehfpmapgenerator} is an isomorphism by using the distinguished triangle 
\begin{equation}
\label{equation:longeffectivecocoverdefinitionhfp}
\f_{t}\f^{-1}(\E)^{hG}
\longrightarrow 
\f^{-1}(\E)^{hG}
\longrightarrow 
\f^{t-1}(\E)^{hG}, 
\end{equation}
obtained by applying homotopy fixed points to \eqref{equation:effectivecocoverdefinition} for $\f^{-1}(\E)$ and identifying $\f^{t-1}\f^{-1}(\E)$ with $\f^{t-1}(\E)$ by means of the distinguished triangles
\begin{equation*}
\f_{t}(\e)
\overset{\simeq}{\longrightarrow}
\e
\longrightarrow 
\f^{t-1}(\e),
\;
\f^{t-1}(\e)
\longrightarrow 
\f^{t-1}(\E)
\overset{\simeq}{\longrightarrow}
\f^{t-1}\f^{-1}(\E).
\end{equation*}
Since $\f^{t-1}(\E)^{hG}$ in \eqref{equation:longeffectivecocoverdefinitionhfp} is $(t-1)$-coeffective there is a canonically induced isomorphism
\begin{equation*}
%\label{equation:positivehfpmap1}
\SH(\Sigma^{s,t}X_{+},\f_{t}\f^{-1}(\E)^{hG})
\overset{\cong}{\longrightarrow}
\SH(\Sigma^{s,t}X_{+},\f^{-1}(\E)^{hG}).
\end{equation*}
On the other hand there are canonical identifications
\begin{align*}
\SH(\Sigma^{s,t}X_{+},\underset{n>0}{\hocolim}\;\f_{-n}\f^{-1}(\E)^{hG}) 
& \cong \underset{n>0}{\colim}\;\SH(\Sigma^{s,t}X_{+},\f_{-n}\f^{-1}(\E)^{hG}) \\
%& = \underset{n>-t}{\colim}\;\SH(\Sigma^{s,t}X_{+},\f_{t}\f^{-1}(\E)^{hG}) \\
& \cong \SH(\Sigma^{s,t}X_{+},\f_{t}\f^{-1}(\E)^{hG}).
\end{align*}
\end{proof}

In the following we make the standing assumption that for all $q\in\Z$ there is a naturally induced isomorphism
\begin{equation}
\label{equation:hfpslicesmap}
\s_{q}(\E^{hG})
\overset{\simeq}{\longrightarrow}
\s_{q}(\E)^{hG}.
\end{equation}
The map in \eqref{equation:hfpslicesmap} arises from the standard adjunction between motivic spectra and ``naive'' $G$-motivic spectra.
That is, 
with the trivial $G$-action on the homotopy fixed points there is a naturally induced $G$-map $\s_{q}(\E^{hG})\rightarrow\s_{q}(\E)$. 
Its adjoint is the map in \eqref{equation:hfpslicesmap}.

\begin{corollary}
\label{corollary:longeffectivecovershfp}
Assuming \eqref{equation:hfpslicesmap} and $n>0$ there is a naturally induced isomorphism
\begin{equation}
\label{equation:longeffectivecovershfp}
\f_{-n}\f^{-1}(\E^{hG})
\overset{\simeq}{\longrightarrow}
\f_{-n}\f^{-1}(\E)^{hG}.
\end{equation}
\end{corollary}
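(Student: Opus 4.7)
The plan is to induct on $n\geq 1$, exploiting the description of $\f_{-n}\f^{-1}(\E)$ from Lemma \ref{lemma:negativedt} as a finite extension of the negative slices $\s_{-1}(\E),\dots,\s_{-n}(\E)$. The comparison map \eqref{equation:longeffectivecovershfp} should be built by functoriality of $\f_{-n}$ and $\f^{-1}$ applied to the canonical map $\E^{hG}\to\E$, together with the adjunction between the trivial-$G$-action functor and homotopy fixed points; its compatibility with the triangles in play will reduce the problem to the comparison of slices provided by the standing assumption \eqref{equation:hfpslicesmap}.

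The base case $n=1$ will be immediate: the proof of Lemma \ref{lemma:negativedt} naturally identifies $\f_{-1}\f^{-1}(\E)\simeq\s_{-1}(\E)$, so substituting this into both sides of \eqref{equation:longeffectivecovershfp} reduces the claim to \eqref{equation:hfpslicesmap} at $q=-1$. For the inductive step, I would apply $(-)^{hG}$ to the triangle \eqref{equation:negativedt} for $\E$, which remains distinguished because homotopy fixed points is a homotopy limit. Comparing this with the analogous triangle for $\E^{hG}$ via the natural comparison maps should yield a morphism of distinguished triangles
\[
\f_{-n+1}\f^{-1}(\E^{hG})\longrightarrow\f_{-n}\f^{-1}(\E^{hG})\longrightarrow\s_{-n}(\E^{hG})
\]
mapping to
\[
\f_{-n+1}\f^{-1}(\E)^{hG}\longrightarrow\f_{-n}\f^{-1}(\E)^{hG}\longrightarrow\s_{-n}(\E)^{hG}
\]
whose outer vertical arrows are isomorphisms by the inductive hypothesis and by \eqref{equation:hfpslicesmap}, respectively. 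The triangulated five-lemma will then force the middle vertical arrow to be an isomorphism, closing the induction.

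The main obstacle I anticipate is organizing the naturality required to assemble this morphism of triangles. The triangle in Lemma \ref{lemma:negativedt} is extracted from the $3\times 3$ diagram \eqref{equation:3by3diagram} built from the effective cover and cocover triangles, each functorial in $\E$; one needs in particular that the canonical map $\E^{hG}\to\E$ be compatible with both the slice triangle \eqref{equation:slicedefinition} and the effective cocover triangle \eqref{equation:effectivecocoverdefinition}. Once this bookkeeping is in place the isomorphism in \eqref{equation:longeffectivecovershfp} will follow formally, with no further input beyond \eqref{equation:hfpslicesmap} required.
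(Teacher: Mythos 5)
Your proposal is correct and is essentially the argument the paper's one-line proof (``Follows from Lemma \ref{lemma:negativedt} under the stated assumptions'') intends: induct on $n$ using the distinguished triangle \eqref{equation:negativedt}, with the base case $\f_{-1}\f^{-1}(\E)\simeq\s_{-1}(\E)$ reducing directly to \eqref{equation:hfpslicesmap}, and at each inductive step apply homotopy fixed points to the triangle, compare via the natural maps, and invoke the two-out-of-three property for morphisms of distinguished triangles. Your remark about the naturality bookkeeping is apt but the required compatibility is standard, since the slice, effective cover and cocover functors and their defining triangles are all functorial in $\E$ and the comparison maps are all instances of the canonical map from applying such a functor to a homotopy limit to the homotopy limit of the functor applied levelwise.
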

\begin{proof}
Follows from Lemma \ref{lemma:negativedt} under the stated assumptions.
\end{proof}

\begin{corollary}
\label{corollary:negativesliceshfp}
Assuming \eqref{equation:hfpslicesmap} and $n\geq -q>0$ there is a naturally induced isomorphism
\begin{equation}
\label{equation:positivehfpmapslices}
\s_{q}(\E^{hG})
\overset{\simeq}{\longrightarrow}
\s_{q}(\underset{n>0}{\hocolim}\;\f_{-n}\f^{-1}(\E)^{hG}).
\end{equation}
\end{corollary}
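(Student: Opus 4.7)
The plan is to chain together three earlier results: Lemma \ref{lemma:negativesliceisomorphism}, Corollary \ref{corollary:longeffectivecovershfp}, and the fact that the slice functors preserve sequential homotopy colimits. The standing assumption \eqref{equation:hfpslicesmap} is used only through the second of these.

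First I would apply Lemma \ref{lemma:negativesliceisomorphism} directly to the motivic spectrum $\E^{hG}$ (viewed simply as an object of $\SH$): for every $n\geq -q>0$ there is a natural isomorphism
\[
\s_{q}(\E^{hG})\overset{\simeq}{\longrightarrow}\s_{q}\f_{-n}\f^{-1}(\E^{hG}).
\]
Invoking Corollary \ref{corollary:longeffectivecovershfp} to rewrite $\f_{-n}\f^{-1}(\E^{hG})\simeq\f_{-n}\f^{-1}(\E)^{hG}$, this yields
\[
\s_{q}(\E^{hG})\overset{\simeq}{\longrightarrow}\s_{q}(\f_{-n}\f^{-1}(\E)^{hG})\qquad(n\geq -q>0).
\]

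Next, since $\f_{q}$ preserves homotopy colimits (the fact cited in the proof of Lemma \ref{lemma:slicefiltrationcomplete}), so does its cofiber $\s_{q}$. Hence
\[
\s_{q}\bigl(\underset{n>0}{\hocolim}\;\f_{-n}\f^{-1}(\E)^{hG}\bigr)\simeq\underset{n>0}{\hocolim}\;\s_{q}(\f_{-n}\f^{-1}(\E)^{hG}).
\]
By the previous paragraph, once $n\geq -q$ the transition maps in this tower are isomorphisms, so the sequential diagram is essentially constant with value $\s_{q}(\E^{hG})$. Its homotopy colimit therefore evaluates to $\s_{q}(\E^{hG})$, and chaining these two identifications produces the isomorphism claimed in \eqref{equation:positivehfpmapslices}.

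The only subtlety, and the step that will require care rather than cleverness, is to verify that this composite matches the map described as naturally induced in the statement. That map comes from the canonical $\E^{hG}\to\underset{n>0}{\hocolim}\;\f_{-n}\f^{-1}(\E)^{hG}$, which via Lemma \ref{lemma:hfphocolim} corresponds to the canonical $\E^{hG}\to\f^{-1}(\E)^{hG}$ arising by taking $hG$ of \eqref{equation:effectivecoverdefinition}. Coincidence with the chain above then follows by naturality of the isomorphisms in Lemma \ref{lemma:negativesliceisomorphism} and Corollary \ref{corollary:longeffectivecovershfp} together with the universal property defining $\f_{-n}$. This comparison is the only piece of bookkeeping beyond the three inputs, and it should be routine.
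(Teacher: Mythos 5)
Your argument follows the paper's proof essentially step for step: invoke Lemma \ref{lemma:negativesliceisomorphism} applied to $\E^{hG}$, use that slices commute with (sequential) homotopy colimits so that the target identifies with a hocolim which stabilizes once $n\geq -q$, and finish by Corollary \ref{corollary:longeffectivecovershfp}. The reasoning is correct and uses the same three inputs in essentially the same way, differing only in the order in which the isomorphisms are chained together.
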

\begin{proof}
From the isomorphisms \eqref{equation:negativesliceisomorphism} in Lemma \ref{lemma:negativesliceisomorphism} we obtain 
\begin{equation}
\label{equation:negativesliceisomorphismhfp}
\s_{q}(\E^{hG})
\overset{\simeq}{\longrightarrow}
\s_{q}\f^{-1}(\E^{hG})
\overset{\simeq}{\longleftarrow}
\s_{q}\f_{-n}\f^{-1}(\E^{hG}).
\end{equation}
Recall that slices commute with homotopy colimits \cite[Corollary 4.5]{Spitzweck}, \cite[Lemma 4.2]{voevodsky.open}.
Thus the target in \eqref{equation:positivehfpmapslices} identifies with the homotopy colimit 
\begin{equation}
\label{equation:slicecommutation}
\underset{n>0}{\hocolim}\;\s_{q}\f_{-n}\f^{-1}(\E)^{hG}.
\end{equation}
With the assumption $n\geq -q>0$ the $q$th slice $\s_{q}\f_{-n}\f^{-1}(\E)^{hG}$ maps isomorphically to \eqref{equation:slicecommutation}.
It remains to apply the isomorphism \eqref{equation:longeffectivecovershfp} in Corollary \ref{corollary:longeffectivecovershfp} and \eqref{equation:negativesliceisomorphismhfp}.
\end{proof}

%\begin{lemma}
%\label{lemma:negativesliceshfp}
%Assuming \eqref{equation:hfpslicesmap} and $n\geq -q>0$ there are naturally induced isomorphisms: 
%\begin{equation}
%\label{equation:negativesliceshfp}
%\xymatrix{
%\s_{q}(\E^{hG}) \ar[r]^-{\simeq} \ar[d]^-{\simeq} &
%\s_{q}\f^{-1}(\E^{hG}) \ar[d]^-{\simeq} &
%\s_{q}\f_{-n}\f^{-1}(\E^{hG}) \ar[l]_-{\simeq} \ar[d]^-{\simeq} \\
%\s_{q}(\E)^{hG} \ar[r]^-{\simeq} &
%\s_{q}\f^{-1}(\E)^{hG} &
%\s_{q}\f_{-n}\f^{-1}(\E)^{hG} \ar[l]_-{\simeq} \\
%}
%\end{equation}
%\end{lemma}
%\begin{proof}
%Here the horizontal isomorphisms follow from Lemma \ref{lemma:negativesliceisomorphism} and hence the vertical ones from \eqref{equation:hfpslicesmap}. 
%\end{proof}

\begin{proposition}
\label{proposition:hfpslices}
Assuming \eqref{equation:hfpslicesmap} the slices of $\e$ commute with homotopy fixed points in the sense that there is a naturally induced isomorphism
\begin{equation}
\label{equation:hfpsliceseffectivemap}
\s_{q}(\e^{hG})
\overset{\simeq}{\longrightarrow}
\s_{q}(\e)^{hG}
\end{equation}
for every $q\in\Z$.
Moreover, 
$\e^{hG}$ is an effective motivic spectrum. 
\end{proposition}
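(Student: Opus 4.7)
The plan is to apply $(-)^{hG}$ to the effective cover triangle \eqref{equation:effectivecoverdefinition} of $\E$, producing a distinguished triangle
\begin{equation*}
\e^{hG}
\longrightarrow
\E^{hG}
\longrightarrow
\f^{-1}(\E)^{hG},
\end{equation*}
and then to compute the $q$th slice of each entry separately in the ranges $q\geq 0$ and $q<0$. In both ranges the target $\s_{q}(\e)^{hG}$ is easy to describe: for $q\geq 0$ the identity $\f_{q}\f_{0}(\E)\simeq\f_{q}(\E)$ gives $\s_{q}(\e)\simeq\s_{q}(\E)$, while for $q<0$ the spectrum $\s_{q}(\e)$ vanishes.

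For $q\geq 0$ the standing assumption \eqref{equation:hfpslicesmap} identifies $\s_{q}(\e)^{hG}\simeq\s_{q}(\E^{hG})$, so it is enough to prove $\s_{q}\f^{-1}(\E)^{hG}\simeq\ast$, which will force $\s_{q}(\e^{hG})\simeq\s_{q}(\E^{hG})$ via the triangle. I would apply Lemma \ref{lemma:hfphocolim} to rewrite $\f^{-1}(\E)^{hG}$ as $\hocolim_{n>0}\f_{-n}\f^{-1}(\E)^{hG}$ and use that slices commute with homotopy colimits, reducing to showing $\s_{q}\f_{-n}\f^{-1}(\E)^{hG}\simeq\ast$. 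By Lemma \ref{lemma:negativedt} and exactness of $(-)^{hG}$, the spectrum $\f_{-n}\f^{-1}(\E)^{hG}$ is a finite extension of the $\s_{-i}(\E)^{hG}$ for $0<i\leq n$. The standing assumption applied to $\E$ at level $-i$ then identifies each piece with the slice $\s_{-i}(\E^{hG})$, which is concentrated in slice degree $-i<0\leq q$, so its $q$th slice vanishes.

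For $q<0$ the target $\s_{q}(\e)^{hG}\simeq\ast$, and I must show $\s_{q}(\e^{hG})\simeq\ast$, equivalently that $\s_{q}(\E^{hG})\to\s_{q}\f^{-1}(\E)^{hG}$ in the induced triangle is an isomorphism. Corollary \ref{corollary:negativesliceshfp} provides an isomorphism $\s_{q}(\E^{hG})\simeq\s_{q}(\hocolim_{n>0}\f_{-n}\f^{-1}(\E)^{hG})$ for $n\geq -q$, and Lemma \ref{lemma:hfphocolim} rewrites this homotopy colimit as $\f^{-1}(\E)^{hG}$. The step I expect to be the main technical obstacle is checking that this composite isomorphism really is the one induced by the canonical map $\E^{hG}\to\f^{-1}(\E)^{hG}$ obtained by taking fixed points of \eqref{equation:effectivecoverdefinition}: the route in Corollary \ref{corollary:negativesliceshfp} factors through the \emph{other} effective cocover $\f^{-1}(\E^{hG})$, so one has to trace the universal property of the effective cocover of $\E^{hG}$ through the identification \eqref{equation:longeffectivecovershfp} supplied by Corollary \ref{corollary:longeffectivecovershfp} together with the naturality of the effective cover maps $\f_{-n}\f^{-1}(-)\to\f^{-1}(-)$ to see the two candidate maps agree.

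Combining the two cases yields \eqref{equation:hfpsliceseffectivemap} for every $q\in\Z$. In particular $\s_{q}(\e^{hG})\simeq\ast$ for all $q<0$, so Lemma \ref{lemma:nonegativeslices} concludes $\e^{hG}\in\SH^{\eff}$.
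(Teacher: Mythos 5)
Your proof is correct and follows the same overall structure as the paper's: apply $(-)^{hG}$ to the effective cover triangle \eqref{equation:effectivecocoverdefinition}, compare slices of the resulting triangle in the two ranges $q\geq 0$ and $q<0$, and finish with Lemma \ref{lemma:nonegativeslices}. For $q<0$ you use exactly the same inputs as the paper (Lemma \ref{lemma:hfphocolim} and Corollary \ref{corollary:negativesliceshfp}), and your flag about tracing the effective-cocover universal property through \eqref{equation:longeffectivecovershfp} is a legitimate subtlety that the paper passes over in silence. The one place where you genuinely diverge is the case $q\geq 0$: you establish $\s_q\f^{-1}(\E)^{hG}\simeq\ast$ by routing through Lemma \ref{lemma:hfphocolim}, commutation of slices with homotopy colimits, the finite filtration of Lemma \ref{lemma:negativedt}, and the standing assumption \eqref{equation:hfpslicesmap} applied to every negative slice. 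The paper obtains the same vanishing in one line by observing that $\f^{-1}(\E)$ is coeffective (lies in the right orthogonal of $\SH^{\eff}$) and that coeffective spectra are closed under homotopy limits, so $\f^{-1}(\E)^{hG}$ is itself coeffective and its nonnegative slices vanish trivially. Your route is valid, but the coeffective-closure observation is the shortest path and requires no appeal to \eqref{equation:hfpslicesmap} on the third vertex of the triangle; worth knowing.
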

\begin{proof}
Applying homotopy fixed points to \eqref{equation:effectivecocoverdefinition} yields the distinguished triangle
\begin{equation}
\label{equation:effectivecocoverdefinitionhfp}
\e^{hG}
\longrightarrow 
\E^{hG}
\longrightarrow 
\f^{-1}(\E)^{hG}.
\end{equation}
From \eqref{equation:effectivecocoverdefinitionhfp} we deduce the commutative diagram of distinguished triangles:
\begin{equation}
\label{equation:slicediagramhfp}
\xymatrix{
\s_{q}(\e^{hG}) \ar[r] \ar[d] &
\s_{q}(\E^{hG}) \ar[r] \ar[d] &
\s_{q}(\f^{-1}(\E)^{hG}) \ar[d] \\
\s_{q}(\e)^{hG} \ar[r] &
\s_{q}(\E)^{hG} \ar[r] &
\s_{q}(\f^{-1}(\E))^{hG} 
}
\end{equation}

When $q\geq 0$ it follows that $\s_{q}(\f^{-1}(\E))\simeq\s_{q}(\f^{-1}(\E)^{hG})\simeq\ast$ since homotopy limits preserve coeffective motivic spectra.
Since the middle vertical map in \eqref{equation:slicediagramhfp} is an isomorphism, 
see the assumption \eqref{equation:hfpslicesmap}, 
so is \eqref{equation:hfpsliceseffectivemap}.

When $q<0$, 
\eqref{equation:positivehfpmap} and \eqref{equation:positivehfpmapslices} imply that $\s_{q}(\E^{hG})\longrightarrow\s_{q}(\f^{-1}(\E)^{hG})$ is an isomorphism.
Lemma \ref{lemma:nonegativeslices} implies $\e^{hG}\in\SH^{\eff}$ and thus $\s_{q}(\e^{hG})\longrightarrow\s_{q}(\e)^{hG}$ is an isomorphism.
\end{proof}

\begin{lemma}
\label{lemma:hfpcommute}
Assuming \eqref{equation:hfpslicesmap} there are naturally induced isomorphisms
\begin{equation}
\label{equation:hfpcommuteeffective}
\f^{q}(\e^{hG})
\overset{\simeq}{\longrightarrow}
\f^{q}(\e)^{hG}, 
\;
\f_{q}(\e^{hG})
\overset{\simeq}{\longrightarrow}
\f_{q}(\e)^{hG}.
\end{equation}
\end{lemma}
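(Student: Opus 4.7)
The plan is to deduce both isomorphisms in \eqref{equation:hfpcommuteeffective} simultaneously. Completing to a morphism of the distinguished triangles \eqref{equation:effectivecocoverdefinition} for $\e$ (after applying $hG$) and for $\e^{hG}$, with the identity on the middle term $\e^{hG}$, couples the comparison maps $\psi_q\colon\f_q(\e^{hG})\to\f_q(\e)^{hG}$ and $\phi^{q-1}\colon\f^{q-1}(\e^{hG})\to\f^{q-1}(\e)^{hG}$, so by the five lemma in a triangulated category one is an isomorphism exactly when the other is. I therefore focus on the maps $\psi_q$.

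First I dispose of the easy ranges. Since $\e^{hG}\in\SH^{\eff}$ by Proposition \ref{proposition:hfpslices}, both $\f_q(\e^{hG})$ and $\f_q(\e)^{hG}$ equal $\e^{hG}$ for every $q\leq 0$, and both $\f^q(\e^{hG})$ and $\f^q(\e)^{hG}$ are contractible for $q\leq -1$. For $q\geq 1$, I construct $\psi_q$ as follows. Applying $hG$ to \eqref{equation:effectivecocoverdefinition} for $\e$ yields a distinguished triangle, as $hG$ is triangulated, being a homotopy limit. The right orthogonal to $\Sigma^q_T\SH^{\eff}$ is closed under homotopy limits, so $\f^{q-1}(\e)^{hG}$ is $(q-1)$-coeffective. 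The composite $\e^{hG}\to\f^{q-1}(\e)^{hG}$ then factors uniquely through $\f^{q-1}(\e^{hG})$ by the universal property of the effective cocover, producing $\phi^{q-1}$; completing to a morphism of triangles with the identity on $\e^{hG}$ produces $\psi_q$.

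The crux is an induction on $q\geq 0$. The base $\psi_0=\mathrm{id}_{\e^{hG}}$ is immediate from the easy ranges. For the inductive step, compare \eqref{equation:slicedefinition} for $\e^{hG}$ with the image of \eqref{equation:slicedefinition} for $\e$ under $hG$ to obtain the ladder
\[
\xymatrix{
\f_{q+1}(\e^{hG}) \ar[r] \ar[d]_{\psi_{q+1}} & \f_q(\e^{hG}) \ar[r] \ar[d]^{\psi_q} & \s_q(\e^{hG}) \ar[d]^{\overline{\psi}_q} \\
\f_{q+1}(\e)^{hG} \ar[r] & \f_q(\e)^{hG} \ar[r] & \s_q(\e)^{hG}
}
\]
with distinguished rows. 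Proposition \ref{proposition:hfpslices} makes $\overline{\psi}_q$ an isomorphism; $\psi_q$ is an isomorphism by the inductive hypothesis; the five lemma then yields $\psi_{q+1}$ an isomorphism.

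The hardest part is ensuring that the comparison maps $\psi_q$, $\phi^q$, $\overline{\psi}_q$ fit into genuine morphisms of distinguished triangles across varying $q$, so that the five lemma really applies. This is handled by adopting a strict functorial model for the slice filtration as in \cite[\S3.1]{GRSO} or \cite[\S3.2]{Pelaez} and modelling homotopy fixed points as an honest right adjoint, whereupon $\f_q$, $\f^q$, $\s_q$, and $(-)^{hG}$ become strict functors with natural triangulated structure maps and the required compatibilities become tautological.
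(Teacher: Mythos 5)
Your proof is correct and follows essentially the same strategy as the paper: an induction on $q$ driven by the defining triangles of the slice filtration, with the base case secured by the effectivity of $\e^{hG}$ (Proposition~\ref{proposition:hfpslices}) and the inductive step by the slice comparison isomorphism, finishing with the five lemma. The only cosmetic difference is that you induct on the effective covers $\f_q$ and then transfer to the cocovers $\f^q$ via \eqref{equation:effectivecocoverdefinition}, whereas the paper inducts directly on the cocovers $\f^q$ using the cofiber sequence $\s_q(\e)\to\f^q(\e)\to\f^{q-1}(\e)$ and then passes to $\f_q$.
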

\begin{proof}
We show that all the nonnegative effective cocovers of $\e$ commute with homotopy fixed points.
With this in hand the assertion for the effective covers of $\e$ follows from \eqref{equation:effectivecocoverdefinition}.

We claim there is a commutative diagram:
\begin{equation}
\label{equation:degreezero}
\xymatrix{
\s_{0}(\e^{hG}) \ar[r]^-{\simeq} \ar[d]_-{\simeq} &
\f^{0}(\e^{hG}) \ar[d] \\
\s_{0}(\e)^{hG} \ar[r]^-{\simeq} &
\f^{0}(\e)^{hG}
}
\end{equation}
Proposition \ref{proposition:hfpslices} shows the left vertical map in \eqref{equation:degreezero} is an isomorphism and that $\e^{hG}$ is an effective motivic spectrum.
Hence $\s_{0}(\e)\simeq\f^{0}(\e)$ and $\s_{0}(\e^{hG})\simeq\f^{0}(\e^{hG})$ by comparing \eqref{equation:slicedefinition} and \eqref{equation:effectivecocoverdefinition}.
It follows that the natural map $\f^{0}(\e^{hG})\longrightarrow\f^{0}(\e)^{hG}$ is also an isomorphism.

The cone of the left vertical map in (\ref{equation:effectivediagram1}) is the $q$th slice.
Hence there is a homotopy cofiber sequence $\s_{q}(\e)\longrightarrow\f^{q}(\e)\longrightarrow\f^{q-1}(\e)$, 
and likewise for $\e^{hG}$.
Proposition \ref{proposition:hfpslices} and induction on $q$ implies that $\f^{q}(\e)$ commutes with homotopy fixed points.
\end{proof}

\begin{corollary}
\label{corollary:effectiveslicecomlete}
Assuming \eqref{equation:hfpslicesmap} there is a naturally induced isomorphism
\begin{equation}
\label{equation:hfpslicecomplete}
\scc(\e^{hG})
\overset{\simeq}{\longrightarrow}
\scc(\e)^{hG}.
\end{equation}
If $\e$ is slice complete then so is $\e^{hG}$.
\end{corollary}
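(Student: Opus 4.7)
The plan is to bootstrap directly from Lemma~\ref{lemma:hfpcommute}, which already supplies the key input: a natural isomorphism $\f^{q-1}(\e^{hG})\overset{\simeq}{\rightarrow}\f^{q-1}(\e)^{hG}$ for every $q\in\Z$ (for $q\leq 0$ both sides are contractible since $\e$ and, by Proposition~\ref{proposition:hfpslices}, $\e^{hG}$ are effective, so $\f^{q-1}$ of them vanishes). My first step would be to assemble these isomorphisms into an equivalence of the towers of effective cocovers indexed by $q$, using the naturality that is already built into the construction of $\f^{q-1}$.

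Next I would pass to the homotopy limit along $q\rightarrow\infty$, which by \eqref{equation:slicecompletion} computes the slice completion of both sides. Since $(-)^{hG}$ is itself a homotopy limit, it commutes with homotopy limits over other small diagrams (Fubini, exactly as used in the proof of Corollary~\ref{corollary:homotopyfixedpointsKtheoryetacomplete}), yielding
\begin{equation*}
\scc(\e^{hG})=\underset{q\rightarrow\infty}{\holim}\;\f^{q-1}(\e^{hG})\overset{\simeq}{\longrightarrow}\underset{q\rightarrow\infty}{\holim}\;\f^{q-1}(\e)^{hG}\overset{\simeq}{\longrightarrow}\Bigl(\underset{q\rightarrow\infty}{\holim}\;\f^{q-1}(\e)\Bigr)^{hG}=\scc(\e)^{hG},
\end{equation*}
which is the desired isomorphism \eqref{equation:hfpslicecomplete}.

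For the second assertion I would apply $(-)^{hG}$ to the defining triangle \eqref{equation:slicecompletedefinition} for $\e$. If $\e$ is slice complete then $\underset{q\rightarrow\infty}{\holim}\;\f_q(\e)\simeq\ast$, so $\e\rightarrow\scc(\e)$ is an isomorphism; composing with \eqref{equation:hfpslicecomplete} shows that $\e^{hG}\rightarrow\scc(\e^{hG})$ is an isomorphism, which by the corresponding triangle \eqref{equation:slicecompletedefinition} for $\e^{hG}$ forces $\underset{q\rightarrow\infty}{\holim}\;\f_q(\e^{hG})\simeq\ast$, i.e., $\e^{hG}$ is slice complete.

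I do not expect a genuine obstacle here: the substantive work has been done in Proposition~\ref{proposition:hfpslices} and Lemma~\ref{lemma:hfpcommute}, so that the termwise comparison of towers is in place, and the remaining point is the formal Fubini interchange of two homotopy limits, which holds in any reasonable strict model for the slice filtration (such as those invoked in \cite[\S3.1]{GRSO}, \cite[\S3.2]{Pelaez}).
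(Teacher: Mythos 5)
Your proposal is correct and follows essentially the same route as the paper: Lemma~\ref{lemma:hfpcommute} provides the termwise identification $\f^{q-1}(\e^{hG})\simeq\f^{q-1}(\e)^{hG}$, the Fubini interchange of homotopy limits gives \eqref{equation:hfpslicecomplete}, and slice completeness of $\e^{hG}$ comes from the factorization $\e^{hG}\rightarrow\scc(\e^{hG})\rightarrow\scc(\e)^{hG}$, exactly as in the paper's proof.
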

\begin{proof}
Lemma \ref{lemma:hfpcommute} and the fact that homotopy limits commute imply there are canonical isomorphisms
\begin{equation*}
\underset{q\rightarrow\infty}{\holim}\;\f^{q-1}(\underset{G}{\holim}\;\e)
\simeq
\underset{q\rightarrow\infty}{\holim}\;\underset{G}{\holim}\;\f^{q-1}(\e)
\simeq
\underset{q\rightarrow\infty}{\holim}\;\underset{G}{\holim}\;\f^{q-1}(\e)
\simeq
\underset{G}{\holim}\;\underset{q\rightarrow\infty}{\holim}\;\f^{q-1}(\e).
\end{equation*}
For slice completeness of $\e^{hG}$ we use the factorization $\e^{hG}\longrightarrow\scc(\e^{hG})\longrightarrow\scc(\e)^{hG}$.
\end{proof}

\begin{corollary}
\label{corollary:hfpeffectivecovers}
Assuming \eqref{equation:hfpslicesmap} there are naturally induced isomorphisms
\begin{equation}
\label{equation:hfpcommute}
\f^{q}(\E^{hG})
\overset{\simeq}{\longrightarrow}
\f^{q}(\E)^{hG}, 
\;
\f_{q}(\E^{hG})
\overset{\simeq}{\longrightarrow}
\f_{q}(\E)^{hG}.
\end{equation}
\end{corollary}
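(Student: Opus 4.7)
The plan is to bootstrap from Lemma \ref{lemma:hfpcommute} (which handles the case of the effective cover $\e$) to the general case, using the distinguished triangle $\e\longrightarrow \E\longrightarrow \f^{-1}(\E)$ and its decomposition into effective and negative parts. Throughout, the key structural fact is that the homotopy fixed point functor $(-)^{hG}$, being a homotopy limit, preserves coeffective motivic spectra.

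First I would identify $\e^{hG}$ as the effective cover of $\E^{hG}$. Proposition \ref{proposition:hfpslices} already shows $\e^{hG}\in\SH^{\eff}$. Applying $(-)^{hG}$ to \eqref{equation:effectivecoverdefinition} gives a distinguished triangle $\e^{hG}\to \E^{hG}\to \f^{-1}(\E)^{hG}$ whose last term is coeffective. Hence by the universal property of the effective cover there is a canonical isomorphism $\f_0(\E^{hG})\overset{\simeq}{\to}\e^{hG}$, and correspondingly $\f^{-1}(\E^{hG})\overset{\simeq}{\to}\f^{-1}(\E)^{hG}$.

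Next, for $q\geq 0$ the fact that $\f_q\f^{-1}(\E)\simeq\ast$ (from the proof of Lemma \ref{lemma:negativedt}) together with the triangle \eqref{equation:effectivecoverdefinition} yields $\f_q(\E)\simeq \f_q(\e)$; the same argument applied to $\E^{hG}$ (using coeffectivity of $\f^{-1}(\E)^{hG}$) gives $\f_q(\E^{hG})\simeq \f_q(\e^{hG})$. Chaining with Lemma \ref{lemma:hfpcommute} produces the desired isomorphism for nonnegative $q$. For $q=-n<0$, I would compare the distinguished triangle \eqref{equation:longeffectivecocoverdefinition} with its analogue for $\E^{hG}$: applying $(-)^{hG}$ to \eqref{equation:longeffectivecocoverdefinition} gives
\begin{equation*}
\e^{hG}\longrightarrow \f_{-n}(\E)^{hG}\longrightarrow \f_{-n}\f^{-1}(\E)^{hG},
\end{equation*}
while the analogous triangle for $\E^{hG}$ reads $\e^{hG}\to \f_{-n}(\E^{hG})\to \f_{-n}\f^{-1}(\E^{hG})$. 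Corollary \ref{corollary:longeffectivecovershfp} identifies the right-hand terms, and the identification of the left-hand term is the one from the previous paragraph. By the triangulated five-lemma, the middle terms are isomorphic, giving the isomorphism for $\f_q$. The isomorphism for $\f^q$ then follows by applying $(-)^{hG}$ to the triangle $\f_{q+1}(\E)\to\E\to\f^q(\E)$ and comparing with its counterpart for $\E^{hG}$.

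The main obstacle is the initial identification $\e^{hG}\simeq \f_0(\E^{hG})$, since everything subsequent relies on being able to interchange the effective cover with $(-)^{hG}$; once one knows that $(-)^{hG}$ preserves both effective objects (Proposition \ref{proposition:hfpslices}) and coeffective objects, the rest of the argument is a formal diagram chase. A minor point of care is that Corollary \ref{corollary:longeffectivecovershfp} only directly handles $\f_{-n}\f^{-1}$ for $n>0$, but this is exactly the range required, since the $q\geq 0$ case is treated separately via Lemma \ref{lemma:hfpcommute}.
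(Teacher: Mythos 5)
Your proof is correct and relies on the same core inputs as the paper (Proposition \ref{proposition:hfpslices} and the fact that homotopy limits preserve coeffective spectra), but you structure the argument somewhat differently. For the $q=0$ step the paper shows $\f^{-1}(\E^{hG})\to\f^{-1}(\E)^{hG}$ is an isomorphism by checking that it induces an isomorphism on all slices and then citing Lemma \ref{lemma:sliceiso}; you instead observe directly that the triangle $\e^{hG}\to\E^{hG}\to\f^{-1}(\E)^{hG}$ already exhibits the effective/coeffective decomposition of $\E^{hG}$, which identifies $\f_0(\E^{hG})\simeq\e^{hG}$ and $\f^{-1}(\E^{hG})\simeq\f^{-1}(\E)^{hG}$ by orthogonality with no slice computation. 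For the remaining $q$ the paper only says ``the general cases follow by using induction on \eqref{equation:slicedefinition} and \eqref{equation:effectivecocoverdefinition},'' whereas you spell this out: $q\geq 0$ by reducing to $\e$ and invoking Lemma \ref{lemma:hfpcommute}, $q<0$ by comparing the triangle \eqref{equation:longeffectivecocoverdefinition} with its analogue for $\E^{hG}$ and applying Corollary \ref{corollary:longeffectivecovershfp} plus the triangulated five-lemma, and finally $\f^q$ from $\f_{q+1}$ via \eqref{equation:effectivecocoverdefinition}. Your version is thus a bit more explicit and bypasses Lemma \ref{lemma:sliceiso}; its only implicit point is that all the vertical maps in your triangle comparisons are instances of a single natural transformation, so the squares commute and the five-lemma applies --- the same naturality the paper streams through its diagram \eqref{equation:slicediagram}.
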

\begin{proof}
There is a naturally induced commutative diagram of distinguished triangles:
\begin{equation}
\label{equation:slicediagram}
\xymatrix{
\f_{0}(\E^{hG}) \ar[r] \ar[d] &
\E^{hG} \ar[r] \ar@{=}[d] &
\f^{-1}(\E^{hG}) \ar[d] \\
\f_{0}(\E)^{hG} \ar[r] &
\E^{hG} \ar[r] &
\f^{-1}(\E)^{hG}
}
\end{equation}
Proposition \ref{proposition:hfpslices} shows that $\f_{0}(\E^{hG})\longrightarrow\e^{hG}$ is a map between effective motivic spectra. 
It follows that $\f^{-1}(\E^{hG})\longrightarrow\f^{-1}(\E)^{hG}$ induces an isomorphism on all negative slices.
Since it is a map between coeffective spectra, 
it is in fact an isomorphism according to Lemma \ref{lemma:sliceiso}.

The general cases follow by using induction on \eqref{equation:slicedefinition} and \eqref{equation:effectivecocoverdefinition}.
\end{proof}

We end this section by discussing $G$-fixed points in more detail. 
Let $\mathcal{O}_{G}$ denote the orbit category of $G$ with objects $\{G/H\}$ and morphisms the $G$-maps $\map_{G}(G/H,G/K)\cong (G/K)^{H}$
\cite[\S1.8]{MR2456045}.
Let $\MSS$ be a highly structured model for the stable motivic homotopy category, 
e.g., motivic functors \cite{MR2029171}, or motivic symmetric spectra \cite{jardine.symmetric}. 
Let $\MSS^{\eff}$ be the Bousfield colocalization of $\MSS$ with respect to the set of objects $\Sigma^{p,0}\Sigma^{\infty}_{\mathbf{P}^{1}}X_{+}$, 
where $X\in\Sm_{F}$ and $p\in\Z$ (it suffices to consider $p\leq 0$).
Its homotopy category is $\SH^{\eff}$.
As a model for naive $G$-motivic spectra we use the functor category $[\mathcal{O}_{G}^{\op},\MSS]$ with the projective model structure \cite[Theorem 11.6.1]{Hirschhorn}.
There is a naturally induced Quillen adjunction:
\begin{equation}
\label{equation:Gadjunction}
\xymatrix{
\ii_{0}^{G}
\colon
[\mathcal{O}_{G}^{\op},\MSS^{\eff}]
\ar@<0.5ex>[r]  
& 
\ar@<0.5ex>[l] 
[\mathcal{O}_{G}^{\op},\MSS]
\colon 
\rr_{0}^{G}
} 
\end{equation}

Evaluating a naive $G$-motivic spectrum $\underline{\E}$ at the orbits corresponding to $G$ and the identity element yields the underlying motivic spectrum $\E$ 
and the $G$-fixed points $\E^{G}$,
respectively.
Let $\underline{\e}$ be the naive $G$-motivic spectrum $\f_{0}^{G}(\underline{\E})$, 
where $\f_{0}^{G}=\mathbf{L}\ii_{0}^{G}\circ\rr_{0}^{G}$.
(Forgetting the $G$-action, $\underline{\e}$ coincides with $\e$.)
Since evaluating at the identity orbit commutes with \eqref{equation:Gadjunction}, 
we obtain an isomorphism
\begin{equation}
\label{equation:Gisos}
%\underline{\e}^{G}
%=
\f_{0}^{G}(\underline{\E})^{G}
\cong
\f_{0}(\underline{\E}^{G}).
\end{equation}
In particular, 
$\kgl^{C_{2}}$ coincides with the effective hermitian $K$-theory spectrum $\kq$, 
cf.~\eqref{equation:kohlp2}.

\section{The slice spectral sequence}
\label{section:sss}

The trigraded slice spectral sequence for $\E$ arising from \eqref{equation:slicefiltration} takes the form
\begin{equation}
\label{equation:splicespectralsequence2}
\pi_{\star}\s_{\ast}(\E)
\Longrightarrow
\pi_{\star}\E.
\end{equation}
This is an upper half-plane spectral sequence with entering differentials \cite[\S7]{Boardman} because $\pi_{p,w}\s_{q}(\E)=0$ for $q<w$, 
cf.~\cite[\S7]{voevodsky.open}.
A standard argument shows that (\ref{equation:splicespectralsequence2}) converges conditionally to the motivic homotopy groups of $\scc(\E)$ in the sense of \cite[Definition 5.10]{Boardman}.
For the following result we refer to \cite[Lemma 3.14]{roendigs-spitzweck-oestvaer.slices-sphere}.
\begin{lemma}
\label{lemma3}
Suppose $\e\in\SH^{\eff}$ and $\e/\eta$ is slice complete. 
Then there is a naturally induced isomorphism between $\e^{\wedge}_{\eta}$ to $\scc(\e)$.
\end{lemma}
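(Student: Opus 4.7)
The plan is to construct the natural comparison map $\phi\colon \e^{\wedge}_{\eta} \to \scc(\e)$ and show it is an isomorphism. The construction of $\phi$ uses the universal property of $\eta$-completion: once $\scc(\e)$ is verified to be $\eta$-complete, the unit map $\e \to \scc(\e)$ factors uniquely through $\e \to \e^{\wedge}_{\eta}$.

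First I would check that $\scc(\e)$ is $\eta$-complete. Since $\e \in \SH^{\eff}$, one has $\s_q(\e) \simeq \ast$ for $q < 0$, and hence each effective cocover $\f^{q-1}(\e)$ is a finite extension of the slices $\s_0(\e), \dots, \s_{q-1}(\e)$. By Lemma \ref{lemma:slicesareetacomplete} every such slice is $\eta$-complete; since the full subcategory of $\eta$-complete motivic spectra is triangulated and closed under homotopy limits, $\scc(\e) = \underset{q\rightarrow\infty}{\holim}\,\f^{q-1}(\e)$ is $\eta$-complete.

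Next I would prove by induction on $n$ that $\e/\eta^n$ is slice complete for every $n \geq 1$, with the hypothesis providing the base case. Applying the octahedral axiom to the factorization $\eta^n = \eta^{n-1}\circ \Sigma^{n-1,n-1}\eta$ yields a distinguished triangle
\[
\Sigma^{n-1,n-1}(\e/\eta) \longrightarrow \e/\eta^n \longrightarrow \e/\eta^{n-1}.
\]
Bigraded suspension preserves slice completeness since $\f_q\Sigma^{a,b} \simeq \Sigma^{a,b}\f_{q-b}$, and extensions in distinguished triangles do as well, so the induction goes through.

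Finally, applying the exact functor $\scc$---which commutes with $\Sigma^{n,n}$ via the reindexing identity $\f^{q-1}(\Sigma^{n,n}X) \simeq \Sigma^{n,n}\f^{q-1-n}(X)$---to the defining cofiber sequence $\Sigma^{n,n}\e \to \e \to \e/\eta^n$ yields $\scc(\e)/\eta^n \simeq \scc(\e/\eta^n) \simeq \e/\eta^n$, where the last isomorphism uses slice completeness of $\e/\eta^n$. Passing to $\underset{n\rightarrow\infty}{\holim}$ gives $\scc(\e)^{\wedge}_{\eta}\simeq \e^{\wedge}_{\eta}$, which combined with the $\eta$-completeness of $\scc(\e)$ established at the outset yields $\scc(\e)\simeq \e^{\wedge}_{\eta}$; naturality identifies this composite with $\phi$. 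I expect the main technical point to be ensuring that $\scc$ is genuinely exact and commutes strictly enough with $\Sigma^{n,n}$ for the cofiber-sequence manipulation to be legitimate---this relies on the strict model for the slice filtration already invoked in the paper.
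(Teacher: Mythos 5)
The paper does not prove this lemma in the text: it simply refers to \cite[Lemma 3.14]{roendigs-spitzweck-oestvaer.slices-sphere}, so there is no in-paper argument to compare against word-for-word. Your reconstruction is correct and is in the same spirit as what one would expect the cited lemma to prove. The three ingredients you identify are exactly the right ones: (i) $\scc(\e)$ is $\eta$-complete because, for $\e\in\SH^{\eff}$, each $\f^{q-1}(\e)$ is a finite extension of $\s_{0}(\e),\dots,\s_{q-1}(\e)$, which are $\eta$-complete by Lemma~\ref{lemma:slicesareetacomplete}, and $\eta$-complete spectra form a triangulated subcategory closed under homotopy limits; (ii) slice completeness of $\e/\eta$ propagates to $\e/\eta^{n}$ via the octahedral triangle $\Sigma^{n-1,n-1}(\e/\eta)\to\e/\eta^{n}\to\e/\eta^{n-1}$, since $\Sigma^{a,b}$ reindexes the slice tower and slice completeness is preserved by extensions; and (iii) $\scc(\e)^{\wedge}_{\eta}\simeq\e^{\wedge}_{\eta}$ from $\scc(\e/\eta^{n})\simeq\scc(\e)/\eta^{n}$, followed by $\holim_{n}$. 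The one place that needs a word of justification beyond what you write is the step $\scc(\e)/\eta^{n}\simeq\scc(\e/\eta^{n})$: it is not enough that $\scc$ is exact and commutes with $\Sigma^{n,n}$ --- one must also identify $\scc(\eta^{n}\wedge\e)$ with $\eta^{n}\wedge\scc(\e)$ under the commutation isomorphism, which is a naturality statement for $\f_{q}$ regarded as a module functor over the invertible objects (equivalently, $\Sigma^{n,n}\f_{q}\Sigma^{-n,-n}\simeq\f_{q+n}$ canonically, being two right adjoints of the same inclusion). You correctly flag this as the technical crux; with that point filled in, the argument is complete.
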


\begin{proposition}
\label{proposition:ccsss}
Suppose $\e\in\SH^{\eff}$ and $\e/\eta$ is slice complete. 
There is a conditionally convergent slice spectral sequence
\begin{equation}
\label{equation:splicespectralsequence3}
\pi_{\star}\s_{\ast}(\e)
\Longrightarrow
\pi_{\star}\e^{\wedge}_{\eta}.
\end{equation}
\end{proposition}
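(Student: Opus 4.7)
The plan is to piece together two observations already in the excerpt. Immediately preceding the proposition the authors remark that the trigraded slice spectral sequence $\pi_{\star}\s_{\ast}(\e) \Rightarrow \pi_{\star}\e$ converges conditionally to $\pi_{\star}\scc(\e)$ in the sense of \cite[Definition 5.10]{Boardman}; this relies only on the vanishing $\pi_{p,w}\s_{q}(\e)=0$ for $q<w$ (which is purely formal, holding for any motivic spectrum by \cite[\S7]{voevodsky.open}) together with the description of the abutment as the homotopy groups of the homotopy limit of the tower $\{\f^{q-1}(\e)\}$. So without any hypothesis on $\e$ we already have a conditionally convergent slice spectral sequence with abutment $\pi_{\star}\scc(\e)$.

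The proof then reduces to identifying this abutment with $\pi_{\star}\e^{\wedge}_{\eta}$. This is precisely the content of Lemma \ref{lemma3}: under the stated hypotheses ($\e\in\SH^{\eff}$ and $\e/\eta$ slice complete) there is a natural isomorphism $\e^{\wedge}_{\eta}\overset{\simeq}{\longrightarrow}\scc(\e)$ in $\SH$. Applying $\pi_{\star}$ transports the conditional convergence verbatim, because conditional convergence is a statement about the tower $\{\f^{q-1}(\e)\}$ and its homotopy limit, and Lemma \ref{lemma3} identifies that homotopy limit, as an object of $\SH$, with $\e^{\wedge}_{\eta}$. Thus \eqref{equation:splicespectralsequence3} is nothing but the conditionally convergent slice spectral sequence for $\e$ after this identification of abutments.

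Because both ingredients are already established in the excerpt, there is essentially no obstacle to overcome: the proposition is a one-line corollary. The only conceptual point worth mentioning is that the two hypotheses $\e\in\SH^{\eff}$ and slice completeness of $\e/\eta$ enter exclusively through Lemma \ref{lemma3}; they play no role in the conditional convergence itself. Accordingly, the written proof will consist of little more than the sentence ``combine the remark preceding the proposition with Lemma \ref{lemma3}.''
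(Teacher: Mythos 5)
Your proposal is correct and matches the paper's proof exactly: the paper's one-line argument invokes the conditional convergence of \eqref{equation:splicespectralsequence2} to $\pi_{\star}\scc(\e)$ together with Lemma \ref{lemma3}'s identification $\e^{\wedge}_{\eta}\simeq\scc(\e)$ under the stated hypotheses. Nothing is missing.
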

\begin{proof}
This follows from Lemma \ref{lemma3} and \eqref{equation:splicespectralsequence2}.
\end{proof}

\section{Proofs of Theorems \ref{theorem:kqetahomotopylimit} and \ref{theorem:KQetahomotopylimit}}
\label{section:proofsofmainresults}

\begin{corollary}
\label{corollary:ccsssforkgl}
For effective hermitian $K$-theory there is a conditionally convergent slice spectral sequence
\begin{equation}
\label{equation:splicespectralsequencekq}
\pi_{\star}\s_{\ast}(\kq)
\Longrightarrow
\pi_{\star}\kq^{\wedge}_{\eta}.
\end{equation}
\end{corollary}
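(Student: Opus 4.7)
The plan is to apply Proposition \ref{proposition:ccsss} directly with $\e=\kq$. Two hypotheses must be checked: (i) $\kq\in\SH^{\eff}$, and (ii) $\kq/\eta$ is slice complete. Condition (i) holds by construction, since $\kq$ is defined as the effective cover of $\KQ$; equivalently, by the identification at the end of \S\ref{section:sf}, $\kq\cong\f_{0}(\KQ)$ lies in $\SH^{\eff}$.

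Once (i) and (ii) are established, the conclusion follows formally: Lemma \ref{lemma3} identifies $\kq^{\wedge}_{\eta}$ with the slice completion $\scc(\kq)$, and the general slice spectral sequence \eqref{equation:splicespectralsequence2} for $\kq$ converges conditionally to $\pi_{\star}\scc(\kq)\cong\pi_{\star}\kq^{\wedge}_{\eta}$, yielding \eqref{equation:splicespectralsequencekq}.

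The content of the corollary is therefore the verification of (ii). The strategy is a connectivity argument modeled on Lemma \ref{lemma:Ktheoryslicecomplete}. Using the cofiber sequence $\Sigma^{1,1}\kq\xrightarrow{\eta}\kq\longrightarrow\kq/\eta$ and the known vanishing range of $\underline{\pi}_{p,q}\kq$ (via the computation of $\s_{\ast}(\kq)$ in \cite{R-O} together with the fact that $\kq\in\SH^{\eff}$ is built from motivic Eilenberg--MacLane slices), one deduces a connectivity bound of the form $\underline{\pi}_{p,w}(\kq/\eta)=0$ for $p$ below a linear function of $w$. Invoking \cite[Lemma 3.17]{roendigs-spitzweck-oestvaer.slices-sphere} as in Lemma \ref{lemma:Ktheoryslicecomplete}, this makes $\f_{q}(\kq/\eta)$ increasingly connected as $q\to\infty$, so that for every generator $\Sigma^{s,t}X_{+}$ of $\SH$ the groups $[\Sigma^{s,t}X_{+},\f_{q}(\kq/\eta)]$ vanish eventually. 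Letting $q\to\infty$ gives $\holim_{q\to\infty}\f_{q}(\kq/\eta)\simeq\ast$, as required.

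The main obstacle is extracting the required connectivity of $\kq/\eta$. Unlike $\KGL$, which is orientable and for which the vanishing line $p<2q$ is classical, the hermitian spectrum $\kq$ is not oriented and one has to feed in structural information about its slices (the mod-$2$ motivic cohomology pieces assembled in \cite{R-O}) together with the fact that $\eta$ acts on $\kq$ in a controlled, slice-filtered manner. Once that connectivity estimate is in hand, slice completeness of $\kq/\eta$ follows, and the corollary is immediate from Proposition \ref{proposition:ccsss}.
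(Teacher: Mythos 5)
Your proposal correctly reduces the corollary to proving that $\kq/\eta$ is slice complete (the other hypothesis, $\kq\in\SH^{\eff}$, is immediate), but this reduction step is the entire content of the proof and you do not carry it out---you yourself flag the connectivity estimate for $\kq/\eta$ as ``the main obstacle'' and leave it unresolved. A direct vanishing-line argument in the style of Lemma~\ref{lemma:Ktheoryslicecomplete} is not available for $\kq$ the way it is for $\KGL$: hermitian $K$-theory is not oriented, and the slices of $\kq$ computed in \cite{R-O} are infinite wedges of suspensions of motivic Eilenberg--MacLane spectra with suspension degrees tending in both directions, which on its own does not hand you a connectivity bound for $\f_{q}(\kq/\eta)$ as $q\to\infty$.

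The paper's proof uses a different and decisive device that your sketch lacks: the cofiber sequence $\Sigma^{1,1}\KQ\xrightarrow{\eta}\KQ\longrightarrow\KGL$ of \cite[Theorem~3.4]{R-O}. Passing to effective covers identifies the cofiber of $\f_{0}(\eta)\colon\f_{0}(\Sigma^{1,1}\KQ)\longrightarrow\kq$ with $\kgl$; since $\Sigma^{1,1}\kq=\f_{1}(\Sigma^{1,1}\KQ)$ differs from $\f_{0}(\Sigma^{1,1}\KQ)$ by the single slice $\s_{0}(\Sigma^{1,1}\KQ)\simeq\Sigma^{1,1}\s_{-1}(\KQ)$, one sees that $\kq/\eta$ is a finite extension of $\kgl$ by that slice. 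Slice completeness then follows from Lemma~\ref{lemma:Ktheoryslicecomplete} applied to $\kgl$ and Lemma~\ref{lemma:slicesareslicecomplete} applied to the slice, with no new connectivity estimate needed. This identification of $\kq/\eta$, up to a one-step extension, with spectra already known to be slice complete is the key idea missing from your argument.
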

\begin{proof}
The only issue is to identify the quotient of $\kq$ by $\eta$ with a slice complete spectrum.
By \cite[Theorem 3.4]{R-O} there is a homotopy cofiber sequence 
\begin{equation}
\label{equation:etacofiberkqkgl}
\Sigma^{1,1}\KQ
\overset{\eta}{\longrightarrow} 
\KQ
\longrightarrow
\KGL
\end{equation}
relating algebraic and hermitian $K$-theory via $\eta$.
Passing to effective covers in \eqref{equation:etacofiberkqkgl} identifies the cofiber of $\f_0(\eta\colon\Sigma^{1,1}\KQ\longrightarrow\KQ)$ with $\kgl$. 
Hence the cofiber of $\eta\colon\Sigma^{1,1}\kq\longrightarrow\kq$ is an extension of $\kgl$ by $\Sigma^{1,1}\s_{-1}(\KQ)\simeq\s_{0}(\Sigma^{1,1}\KQ)$, 
cf.~\cite[Lemma 2.1]{R-O}, 
so it is slice complete by Lemmas \ref{lemma:slicesareslicecomplete} and \ref{lemma:Ktheoryslicecomplete}. 
This verifies the assumptions in Proposition \ref{proposition:ccsss} for $\kq$.
\end{proof}

Assuming $\vcd_{2}(F)<\infty$, \eqref{equation:hfpslicesmap} holds for $\KGL$ by \cite[Proposition 4.24]{R-O}.
We summarize some useful consequences of the results in \S\ref{section:sf}.
\begin{proposition}
\label{proposition:summary}
The following holds when $\vcd_{2}(F)<\infty$.
\begin{itemize}
\item[(1)]
The homotopy fixed points spectrum of effective $K$-theory $\kgl^{hC_{2}}$ is slice complete.
There is a conditionally convergent slice spectral sequence
\begin{equation}
\label{equation:splicespectralsequencekgl}
\pi_{\star}\s_{\ast}(\kgl^{hC_{2}})
\Longrightarrow
\pi_{\star}\kgl^{hC_{2}}.
\end{equation}
\item[(2)]
There is a naturally induced isomorphism $\f_{q}(\KGL^{hC_{2}})\overset{\simeq}{\longrightarrow}\f_{q}(\KGL)^{hC_{2}}$.
\end{itemize}
\end{proposition}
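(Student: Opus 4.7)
The virtual cohomological dimension hypothesis $\vcd_2(F)<\infty$ enters exactly once, to secure the slice/fixed-point compatibility \eqref{equation:hfpslicesmap} for $\E=\KGL$ via \cite[Proposition 4.24]{R-O}, as recorded in the paragraph preceding the statement. With this standing assumption verified for $\KGL$, both parts of the proposition become direct applications of the machinery developed in Section \ref{section:sf}.

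For part (2), I would apply Corollary \ref{corollary:hfpeffectivecovers} to $\E=\KGL$ with $G=C_2$. The second isomorphism in \eqref{equation:hfpcommute} is precisely the desired statement $\f_{q}(\KGL^{hC_{2}})\overset{\simeq}{\longrightarrow}\f_{q}(\KGL)^{hC_{2}}$ for every $q\in\Z$.

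For part (1), my first step would be to observe that $\kgl=\f_0(\KGL)$ is slice complete by Lemma \ref{lemma:Ktheoryslicecomplete}. Then Corollary \ref{corollary:effectiveslicecomlete}, whose hypothesis \eqref{equation:hfpslicesmap} has just been verified for $\KGL$, yields that $\kgl^{hC_2}$ is also slice complete; equivalently, the slice completion map $\kgl^{hC_2}\longrightarrow\scc(\kgl^{hC_2})$ is an isomorphism. Next, I would specialize the generic trigraded slice spectral sequence \eqref{equation:splicespectralsequence2} to $\E=\kgl^{hC_2}$. By the discussion opening Section \ref{section:sss}, this converges conditionally to $\pi_\star\scc(\kgl^{hC_2})$, and slice completeness identifies this target with $\pi_\star\kgl^{hC_2}$, producing \eqref{equation:splicespectralsequencekgl}.

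\textbf{Main obstacle.} There is no substantive obstacle internal to this proof: the genuine technical input, namely that slices commute with $C_2$-homotopy fixed points for $\KGL$ under $\vcd_2(F)<\infty$, is imported from \cite[Proposition 4.24]{R-O} rather than established here. The proposition itself is a packaging of Corollaries \ref{corollary:effectiveslicecomlete} and \ref{corollary:hfpeffectivecovers} with Lemma \ref{lemma:Ktheoryslicecomplete} and the standard conditional convergence of the slice spectral sequence to the slice completion, placing exactly the tools needed for the proofs of Theorems \ref{theorem:kqetahomotopylimit} and \ref{theorem:KQetahomotopylimit} at our disposal.
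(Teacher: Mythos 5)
Your proposal is correct and follows the paper's own proof essentially verbatim: the paper likewise derives part (1) from Lemma~\ref{lemma:Ktheoryslicecomplete}, Corollary~\ref{corollary:effectiveslicecomlete}, and the conditional-convergence discussion around \eqref{equation:splicespectralsequence2} in \S\ref{section:sss}, and observes that part (2) is a special case of Corollary~\ref{corollary:hfpeffectivecovers}, with $\vcd_2(F)<\infty$ entering only through \cite[Proposition 4.24]{R-O} to secure hypothesis~\eqref{equation:hfpslicesmap} for $\KGL$.
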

\begin{proof}
Here (1) follows from Lemma \ref{lemma:Ktheoryslicecomplete}, Corollary \ref{corollary:effectiveslicecomlete} and the discussion of \eqref{equation:splicespectralsequence2} in \S\ref{section:sss}, 
while (2) is a special case of Corollary \ref{corollary:hfpeffectivecovers}.
\end{proof}

\begin{proof}[Proof of Theorem \ref{theorem:kqetahomotopylimit}]
By \cite[Theorems 4.18, 4.25, 4.27, Lemma 4.26]{R-O} the natural map $\Upsilon\colon\KQ\longrightarrow\KGL^{hC_{2}}$ in (\ref{equation:bigKQhlp}) induces an isomorphism of slices
\begin{equation}
\label{equation:KGLsliceisomorphism}
\s_{q}(\Upsilon)
\colon
\s_{q}(\KQ)
\overset{\simeq}{\longrightarrow}
\s_{q}(\KGL^{hC_{2}})
\simeq
\left\{
\begin{array}{ll}
\Sigma^{2q,q} \MZ \vee \bigvee_{i<0} \Sigma^{2q+2i,q} \mathsf{M}\mathbb{Z}/2 & q\equiv 0 (2), \\
\bigvee_{i<0} \Sigma^{2q+2i+1,q} \mathsf{M}\mathbb{Z}/2 & q\equiv 1 (2).
\end{array}
\right.
\end{equation}
From \eqref{equation:KGLsliceisomorphism} we conclude the natural map $\kq\longrightarrow\f_{0}(\KGL^{hC_{2}})$ induces an isomorphism on slices.
By composing with $\f_{0}(\KGL^{hC_{2}})\overset{\simeq}{\longrightarrow}\kgl^{hC_{2}}$, 
see Proposition \ref{proposition:summary}(2), 
we conclude there is an isomorphism
\begin{equation}
\label{equation:kglsliceisomorphism}
\s_{q}(\gamma)
\colon
\s_{q}(\kq)
\overset{\simeq}{\longrightarrow}
\s_{q}(\kgl^{hC_{2}}).
\end{equation}
Thus $\gamma\colon\kq\longrightarrow\kgl^{hC_{2}}$ in \eqref{equation:kohlp2} induces an isomorphism between the conditionally convergent upper half-plane slice spectral sequences 
\eqref{equation:splicespectralsequencekq} and \eqref{equation:splicespectralsequencekgl}.
The induced map between the filtered target groups is thus an isomorphism \cite[Theorem 7.2]{Boardman}.
This finishes the proof by passing to Nisnevich sheaves of homotopy groups.
\end{proof}

\begin{proof}[Proof of Theorem \ref{theorem:KQetahomotopylimit}]
Using \eqref{equation:effectivecoverdefinition} we obtain the naturally induced commutative diagram of distinguished triangles: 
\begin{equation}
\label{equation:diagram5}
\xymatrix{
\kq
\ar[r] \ar[d]_{\f_{0}(\Upsilon)} &
\KQ
\ar[r] \ar[d]_{\Upsilon} &
\f^{-1}(\KQ)
\ar[d]_{\f^{-1}(\Upsilon)} \\
\f_{0}(\KGL^{hC_{2}})
\ar[r] &
\KGL^{hC_{2}}
\ar[r] &
\f^{-1}(\KGL^{hC_{2}})
}
\end{equation}
Lemma \ref{lemma:sliceiso} and \eqref{equation:KGLsliceisomorphism} imply that $\f^{-1}(\Upsilon)\colon \f^{-1}(\KQ)\longrightarrow\f^{-1}(\KGL^{hC_{2}})$ is an isomorphism because it is a map 
between coeffective spectra and it induces on isomorphism on slices.
By composing with the isomorphism $\f_{0}(\KGL^{hC_{2}})\overset{\simeq}{\longrightarrow}\kgl^{hC_{2}}$ of Proposition \ref{proposition:summary}(2) we obtain a commutative diagram 
of distinguished triangles:
\begin{equation}
\label{equation:diagram6}
\xymatrix{
\kq
\ar[r] \ar[d]_{\gamma} &
\KQ
\ar[r] \ar[d]_{\Upsilon} &
\f^{-1}(\KQ)
\ar[d]_{\f^{-1}(\Upsilon)}^{\simeq} \\
\kgl^{hC_{2}}
\ar[r] &
\KGL^{hC_{2}}
\ar[r] &
\f^{-1}(\KGL^{hC_{2}})
}
\end{equation}
This shows that $\LL_{\eta}(\gamma)$ is an isomorphism if and only if $\LL_{\eta}(\Upsilon)$ is an isomorphism.
It follows that Theorem \ref{theorem:kqetahomotopylimit} implies Theorem \ref{theorem:KQetahomotopylimit}.
\end{proof}

\begin{corollary}
\label{corollary:KWtateK}
If $\vcd_{2}(F)<\infty$ then the $\eta$-completion of the Tate $K$-theory spectrum ${\KGL}^{tC_{2}}$ is contractible.
\end{corollary}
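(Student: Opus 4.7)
The plan is to exploit the identification between the $\eta$-arithmetic square \eqref{equation:etaarithmetic} and the Tate diagram \eqref{equation:tatearithmetic} recorded in the introduction. Under this identification the bottom-right corners match, so ${\KGL}^{tC_{2}}$ is isomorphic to $\KQ^{\wedge}_{\eta}[\eta^{-1}]$. It will then suffice to verify that any motivic spectrum on which $\eta$ acts invertibly has contractible $\eta$-completion.

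For this last step, suppose $\eta\colon\Sigma^{1,1}\E\to\E$ is an isomorphism. Then $\eta^{n}$ is invertible for every $n\geq 1$, so the cofiber $\E/\eta^{n}$ appearing in \eqref{equation:modetapowers} is contractible. Passing to the homotopy limit in \eqref{equation:modetapowerstower} immediately yields $\E^{\wedge}_{\eta}\simeq\ast$. Applied to $\E=\KQ^{\wedge}_{\eta}[\eta^{-1}]\simeq{\KGL}^{tC_{2}}$, on which $\eta$ is tautologically invertible, this produces the desired contractibility.

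The one delicate point is pinning down the identification of the bottom-right corners of the two squares, which combines Theorem \ref{theorem:KQetahomotopylimit} --- giving the left vertical map $\KQ\to{\KGL}^{hC_{2}}\simeq\KQ^{\wedge}_{\eta}$ --- with the fact that both \eqref{equation:etaarithmetic} and \eqref{equation:tatearithmetic} are homotopy pullback squares sharing a common top row $\KQ\to\KW$. Once this matching is in place the corollary is essentially formal, since inverting $\eta$ takes one out of the localization category defining $\eta$-completion; no further input from the slice machinery is needed.
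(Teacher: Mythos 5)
Your proof is correct and rests on the same essential inputs as the paper's, but the packaging differs in a way worth noting. The paper works directly with the two-row Tate diagram from \cite[(20)]{HKO}, in which both rows are cofiber sequences with the homotopy orbits $\KGL_{hC_{2}}$ as the common left-hand term; applying $\eta$-completion to both rows and using Theorem \ref{theorem:KQetahomotopylimit} together with the $\eta$-completeness of $\KGL^{hC_{2}}$ (Corollary \ref{corollary:homotopyfixedpointsKtheoryetacomplete}) and the vanishing $\KW^{\wedge}_{\eta}\simeq\ast$ forces the cofiber $(\KGL^{tC_{2}})^{\wedge}_{\eta}$ to be contractible. You instead first pin down the identification $\KGL^{tC_{2}}\simeq\KQ^{\wedge}_{\eta}[\eta^{-1}]$ by matching the two homotopy pullback squares \eqref{equation:etaarithmetic} and \eqref{equation:tatearithmetic} along their shared top row $\KQ\to\KW$ and their left verticals (which agree via $\LL_{\eta}(\Upsilon)$), and then invoke the formal observation that $\eta$-completion annihilates any spectrum on which $\eta$ is invertible. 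Both arguments consume Theorem \ref{theorem:KQetahomotopylimit} and the identification $\KW\simeq\KQ[\eta^{-1}]$, so they are comparably elementary; what your route adds is that it establishes the sharper statement $\KGL^{tC_{2}}\simeq\KQ^{\wedge}_{\eta}[\eta^{-1}]$ as an intermediate output rather than leaving it implicit. Note that this intermediate identification is precisely the coincidence of the $\eta$-arithmetic square with the Tate diagram that the introduction announces as a supplement to the main results, so in effect you are proving that remark along the way, which is a reasonable (and perhaps cleaner) organization of the same argument.
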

\begin{proof}
Recall that ${\KGL}^{tC_{2}}$ is the cone of the norm map from the homotopy orbits $\KGL_{hC_{2}}$ to the homotopy fixed points $\KGL^{hC_{2}}$ in the Tate diagram \cite[(20)]{HKO}
\begin{equation*}
\xymatrix{
\KGL_{hC_{2}} \ar[r] \ar@{=}[d] & \KQ \ar[r] \ar[d]_{\Upsilon} & \KW \ar[d] \\
\KGL_{hC_{2}} \ar[r] & \KGL^{hC_{2}}  \ar[r] & {\KGL}^{tC_{2}}
}
\end{equation*}
for the $C_{2}$-action on $\KGL$. 
Thus the assertion follows from Theorem \ref{theorem:KQetahomotopylimit} since the higher Witt-theory spectrum $\KW$ can be identified with $\KQ[\eta^{-1}]$,
see e.g., \cite[(7)]{R-O}. 
\end{proof}

\section{Appendix}
\label{section:appendix}
By way of example we show that $\SH^{\eff}$ is not closed under homotopy fixed points.
In effect, 
consider the homotopy fixed points for the trivial $C_{2}$-action on the effective motivic spectrum 
\begin{equation}
\bigvee_{i\geq 0} \Sigma^{i,0}\MZ/2\simeq
\prod_{i\geq 0} \Sigma^{i,0}\MZ/2.
\end{equation} 
Here the sum and product are isomorphic by \cite[Proposition A.5]{R-O}.
Since $C_{2}$-homotopy fixed points commute with products, \cite[Lemma 4.22]{R-O} yields a naturally induced isomorphism
\begin{equation}
\label{equation:MZhfpiso}
\Bigl(\prod_{i\geq 0} \Sigma^{i,0}\MZ/2\Bigr)^{hC_{2}}
\overset{\simeq}{\longrightarrow}
\prod_{i\geq 0} \prod_{j\geq 0} \Sigma^{i-j,0}\MZ/2.
\end{equation}
Assuming \eqref{equation:MZhfpiso} is an isomorphism in $\SH^{\eff}$, 
the countably infinite product $\prod_{n\in \mathbb{N}}\MZ/2$ --- corresponding to indices $i=j$ --- is effective.
However, 
we show that $\prod_{n\in \mathbb{N}}\MZ/2\not\in\SH^{\eff}$.

Recall from \cite[\S2]{Voevodsky:motivicss} the adjunction between $\SH$ and the stable motivic homotopy category of $S^{1}$-spectra $\SH_{s}$: 
\begin{equation*}
\xymatrix{
\Sigma^{\infty}_{t} \colon \SH_{s} \ar@<0.5ex>[r] &
\SH \colon \Omega^{\infty}_{t}. \ar@<0.5ex>[l]
}
\end{equation*}
Now $\Omega^{\infty}_{t}\MZ/2$ is the Eilenberg-MacLane $S^1$-spectrum $\HZ/2$ associated with the constant presheaf $\mathbf{Z}/2$ by 
\cite[Lemma 5.2]{Voevodsky:motivicss}.
It follows that $\Omega^{\infty}_{t}\prod_{n\in\mathbb{N}}\MZ/2$ is the Eilenberg-MacLane $S^1$-spectrum $\HV$ associated with the constant presheaf $\mathbf{V}$, 
where $\mathbf{V}$ is a $\mathbf{Z}/2$-vector space of (uncountable) infinite dimension.
If $\prod_{n\in \mathbb{N}}\MZ/2\in\SH^{\eff}$ we would obtain 
\begin{equation}
\label{equation:1effectiveassumption}
\Omega^{\infty}_{t}\Sigma^{0,1}\prod\MZ/2
\simeq
\prod\Omega^{\infty}_{t}\Sigma^{0,1}\MZ/2
\in\Sigma^{1}_{t}\SH_ {s}
\end{equation}
since $\Omega^{\infty}_{t}\Sigma^{0,1}\MZ/2\in\Sigma^{1}_{t}\SH_ {s}$ by \cite[Theorem 7.4.1]{Levine:slices} (as conjectured in \cite[Conjecture 4]{Voevodsky:motivicss}).
In~(\ref{equation:1effectiveassumption}) we use that $\Sigma^{0,1}$, being an equivalence, commutes with products.
In $\SH_{s}$ there is a canonically induced map 
\begin{equation}
\label{equation:canmap}
\alpha
\colon
(\prod_{n\in \mathbb{N}} \Omega^{\infty}_{t}\MZ/2)
\otimes_{\Omega^{\infty}_{t}\MZ/2} 
\Omega^{\infty}_{t}\Sigma^{0,1}\MZ/2
\longrightarrow
\prod_{n\in \mathbb{N}} \Omega^{\infty}_{t}\Sigma^{0,1}\MZ/2.
\end{equation}
The tensor product in \eqref{equation:canmap} is formed in the module category of the Eilenberg-MacLane $S^1$-spectrum $\Omega^{\infty}_{t}\MZ/2$ \cite{MR3309296}, 
cf.~\cite{Rondigs-Ostvar1}, \cite{Rondigs-Ostvar2}, \cite[Lemma 5.2]{Voevodsky:motivicss}. 
In particular, 
the source of $\alpha$ in~(\ref{equation:canmap}) is $1$-effective, 
because $\Omega^{\infty}_{t}\Sigma^{0,1}\MZ/2\in\Sigma^{1}_{t}\SH_ {s}$ by \cite[Theorem 7.4.1]{Levine:slices} and $\prod_{n\in \mathbb{N}} \Omega^{\infty}_{t}\MZ/2$ is effective,
since it is the Eilenberg-MacLane spectrum associated with $\mathbf{V}$.
We will prove that $\SH_{s}(\Sigma^{n,0}X_{+}\wedge\G,\alpha)$ is an isomorphism for every $X\in\Sm_{F}$ and $n\in\Z$;  
here $\G$ denotes the multiplicative group scheme.
In effect, 
choose an uncountable basis $\mathcal{B}$ of $\mathbf{V}$ and express $\mathbf{V}$ as the filtered colimit of finite dimensional sub-$\mathbf{Z}/2$-vector spaces 
$\mathbf{V}^\prime \subset \mathbf{V}$ spanned by finite subsets $\mathcal{F}\subset \mathcal{B}$. 
Since $\Sigma^{n,0}X_{+}\wedge \G$ is compact in $\SH_{s}$ there are isomorphisms
\begin{align*}
& 
\SH_ {s}\Bigl(\Sigma^{n,0}X_{+}\wedge\G,\bigl(\prod_{n\in \mathbb{N}} \Omega^{\infty}_{t}\MZ/2\bigr)\otimes_{\Omega^{\infty}_{t}\MZ/2}\Omega^{\infty}_{t}\Sigma^{0,1}\MZ/2\Bigr) \cong \\
&
\SH_ {s}\Bigl(\Sigma^{n,0}X_{+}\wedge\G,(\colim_{\mathcal{F}\subset\mathcal{B}} \prod_{f\in \mathcal{F}}\Omega^{\infty}_{t}\MZ/2)\otimes_{\Omega^{\infty}_{t}\MZ/2}\Omega^{\infty}_{t}\Sigma^{0,1}\MZ/2\Bigr) \cong \\
&
\colim_{\mathcal{F}\subset\mathcal{B}} \;
\SH_ {s}\Bigl(\Sigma^{n,0}X_{+}\wedge\G,(\prod_{f\in \mathcal{F}}\Omega^{\infty}_{t}\MZ/2\otimes_{\Omega^{\infty}_{t}\MZ/2}\Omega^{\infty}_{t}\Sigma^{0,1}\MZ/2)\Bigr) \cong \\
&
\colim_{\mathcal{F}\subset\mathcal{B}} \;
\SH_ {s}\Bigl(\Sigma^{n,0}X_{+}\wedge\G,\Omega^{\infty}_{t}\MZ/2 \otimes_{\Omega^{\infty}_{t}\MZ/2} 
\prod_{f\in \mathcal{F}}\Omega^{\infty}_{t}\Sigma^{0,1}\MZ/2\Bigr)   \cong \\
&
\colim_{\mathcal{F}\subset\mathcal{B}} \;
\SH_ {s}\Bigl(\Sigma^{n,0}X_{+}\wedge \G,\prod_{f\in \mathcal{F}} \Omega^{\infty}_{t}\Sigma^{0,1}\MZ/2\Bigr) \cong \\
&
\colim_{\mathcal{F}\subset\mathcal{B}} \;
\SH_ {s}\Bigl(\Sigma^{n,0}X_{+},\Omega_{t}\bigl(\prod_{f\in \mathcal{F}} \Omega^{\infty}_{t}\Sigma^{0,1}\MZ/2\bigr)\Bigr) \cong \\
&
\colim_{\mathcal{F}\subset\mathcal{B}} \;
\SH_ {s}\Bigl(\Sigma^{n,0}X_{+},\prod_{f\in \mathcal{F}} \Omega^{\infty}_{t}\Omega_{t}\Sigma^{0,1}\MZ/2\Bigr) \cong \\
&
\colim_{\mathcal{F}\subset\mathcal{B}} \;
\SH_ {s}\Bigl(\Sigma^{n,0}X_{+},\prod_{f\in \mathcal{F}} \Omega^{\infty}_{t}\Sigma^{-1,0}\MZ/2\Bigr) \cong \\
&
\SH_ {s}\Bigl(\Sigma^{n,0}X_{+},\colim_{\mathcal{F}\subset\mathcal{B}} \;
\prod_{f\in \mathcal{F}} \Omega^{\infty}_{t}\Sigma^{-1,0}\MZ/2\Bigr) \cong \\
&
\SH_ {s}\Bigl((\Sigma^{n,0}X_{+},\prod_{n\in \mathbb{N}} \Omega^{\infty}_{t}\Sigma^{-1,0}\MZ/2\Bigr ) \cong 
\SH_ {s}\Bigl(\Sigma^{n,0}X_{+},\prod_{n\in \mathbb{N}} \Omega^{\infty}_{t}\Omega_{t}\Sigma^{0,1}\MZ/2\Bigr) \cong \\
&
\SH_ {s}\Bigl(\Sigma^{n,0}X_{+},\Omega_{t}\prod_{n\in \mathbb{N}} \Omega^{\infty}_{t}\Sigma^{0,1}\MZ/2\Bigr) \cong
\SH_ {s}\Bigl(\Sigma^{n,0}X_{+}\wedge \G,\prod_{n\in \mathbb{N}} \Omega^{\infty}_{t}\Sigma^{0,1}\MZ/2\Bigr),
\end{align*}
which by canonicity coincides with the map induced by $\alpha$.
If the target in \eqref{equation:canmap} is $1$-effective, 
as implied by \eqref{equation:1effectiveassumption}, 
it would follow that $\alpha$ is an isomorphism.
One checks that $\alpha$ is not an isomorphism by choosing a field $F$ such that $F^{\ast}\otimes \mathbb{Z}/2$ is an infinitely generated $\mathbb{Z}/2$-module, 
e.g., 
$F=\mathbb{Q}$.
The map $\SH_{s}(\Sigma^{-1,0}\mathrm{Spec}(F)_+,\alpha)$ coincides with the canonical map 
\begin{equation}
%\label{equation:canmap2}
\Bigl(\prod \mathbb{Z}/2\Bigr) \otimes_{\mathbb{Z}/2}  (F^{\ast}\otimes \mathbb{Z}/2)
\longrightarrow
\prod F^{\ast}\otimes \mathbb{Z}/2,
\end{equation}
which is not surjective.
Hence $\prod_{n\in \mathbb{N}} \Omega^{\infty}_{t}\Sigma^{0,1}\MZ/2$ cannot be $1$-effective.
As explained above it follows that $(\prod_{i\geq 0} \Sigma^{0,i}\MZ/2)^{hC_{2}}$ is noneffective.

{\bf Acknowledgments.}
The authors gratefully acknowledge support from DFG GK 1916, DFG Priority Program 1786, and RCN Frontier Research Group Project no.~250399.

\begin{footnotesize}

\end{footnotesize}
\vspace{0.1in}

\begin{small}
\begin{center}
Mathematisches Institut, Universit\"at Osnabr\"uck, Germany.\\
e-mail: oliver.roendigs@uni-osnabrueck.de
\end{center}
\begin{center}
Mathematisches Institut, Universit\"at Osnabr\"uck, Germany.\\
e-mail: markus.spitzweck@uni-osnabrueck.de
\end{center}
\begin{center}
Department of Mathematics, University of Oslo, Norway.\\
e-mail: paularne@math.uio.no
\end{center}
\end{small}
\end{document}